\numberwithin{equation}{section}
\newtheorem{thm}[equation]{Theorem}
\crefname{thm}{Theorem}{Theorems}
\Crefname{thm}{Theorem}{Theorems}
\newtheorem{lemma}[equation]{Lemma}
\crefname{lemma}{Lemma}{Lemmas}
\Crefname{lemma}{Lemma}{Lemmas}
\newtheorem{cor}[equation]{Corollary}
\crefname{cor}{Corollary}{Corollaries}
\Crefname{cor}{Corollary}{Corollaries}
\crefname{defi}{Definition}{Definitions}
\Crefname{defi}{Definition}{Definitions}
\crefname{rem}{Remark}{Remarks}
\Crefname{rem}{Remark}{Remarks}
\newtheorem{MainTheorem}{Theorem}
\crefname{MainTheorem}{Theorem}{Theorems}
\Crefname{MainTheorem}{Theorem}{Theorems}
\newtheorem*{conj}{Conjecture}
\newcommand{\soc}{\mathrm{soc}}
\newcommand{\Hom}{\mathrm{Hom}}
\newcommand{\End}{\mathrm{End}}
\newcommand{\Ext}{\mathrm{Ext}}
\newcommand{\cont}{\mathrm{cont}}
\newcommand{\res}{\mathrm{res}}
\newcommand{\wt}{\mathrm{wt}}
\newcommand{\quot}{\mathrm{quot}}
\newcommand{\core}{\mathrm{cor}}
\newcommand{\s}{{\sf S}}
\def\mod#1{#1\!\operatorname{-mod}}
\newcommand{\Z}{\mathbb{Z}}
\newcommand{\F}{\mathbb{F}}
\renewcommand{\epsilon}{\varepsilon}
\renewcommand{\phi}{\varphi}
\newcommand{\xymat}{\xymatrix@R=6pt@C=10pt}
\newcommand{\la}{\lambda}
\newcommand{\be}{\beta}
\newcommand{\al}{\alpha}
\newcommand{\eps}{\varepsilon}
\renewcommand{\phi}{\varphi}
\newcommand{\de}{\delta}
\newcommand{\Ga}{\Gamma}
\newcommand{\Om}{\Omega}
\newcommand{\da}{{\downarrow}}
\newcommand{\ua}{{\uparrow}}
\def\Par{{\mathscr {P}}}
\def\Parreg{{\mathscr P}^{\text{\rm $p$-reg}}}
\def\Parres{{\mathscr P}^{\text{\rm $p$-res}}}
\newcommand{\te}{{\tilde e}}
\newcommand{\tf}{{\tilde f}}
\newcommand{\he}{{\hat e}}
\newcommand{\hf}{{\hat f}}
\newcommand{\mods}{\textrm{-mod}}
\newcommand{\beps}{\eps'}
\newcommand{\bphi}{\phi'}
\newcommand{\Rem}{{\operatorname{Rem}}}
\newcommand{\Add}{{\operatorname{Add}}}
\def\bA{\text{\boldmath$A$}}
\def\bB{\text{\boldmath$B$}}
\begin{document}

\title[Self-extensions over symmetric groups]{{\bf On self-extensions of irreducible modules over symmetric groups, II}}

\author{\sc Lucia Morotti}
\address
{Department of Mathematics\\ University of York\\ York YO10 5DD, U.K.} 
\email{lucia.morotti@uni-duesseldorf.de}

\thanks{The author thanks Haralampos Geranios for conversations on these results. The author was supported by the Royal Society grant URF$\backslash$R$\backslash$221047.}

\begin{abstract}
It is conjectured that irreducible representations of symmetric groups have no non-trivial self-extension over fields of odd characteristic. We improve on partial results showing evidence of this conjecture. 
\end{abstract}

\maketitle

\section{Introduction}

Let $\s_n$ be the symmetric group and $\F$ be an algebraically closed field of characteristic $p\geq 0$. It is well known, see for example \cite{JamesBook}, that the irreducible representations of $\F\s_n$ are labeled by $p$-regular partitions of $n$. We let $\Parres_n$ denote the set of $p$-regular partitions of $n$ and for $\la\in\Parres_n$ we let $D^\la$ denote the corresponding irreducible $\F\s_n$-representation.

It is conjectured that

\begin{conj}
If $p\geq 3$ then, for every $\la\in\Parres_n$, $D^\la$ has no non-trivial self-extension , that is $\Ext^1(D^\la,D^\la)=0$.
\end{conj}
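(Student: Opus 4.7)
The natural strategy is induction on $n$, using Kleshchev's modular branching rules to reduce to smaller symmetric groups. For the inductive step, fix $\la\in\Parres_n$ with $n\geq 1$ and choose a residue $i\in\Z/p\Z$ with $\eps_i(\la)\geq 1$ (such an $i$ exists whenever $\la\neq\varnothing$). Writing $e_i$ for the $i$-restriction functor from $\mod{\F\s_n}$ to $\mod{\F\s_{n-1}}$, both the head and the socle of $e_i D^\la$ equal $(D^{\te_i \la})^{\oplus\eps_i(\la)}$. The irreducible $D^{\te_i \la}$ is the target of the induction, and by the inductive hypothesis $\Ext^1_{\F\s_{n-1}}(D^{\te_i \la},D^{\te_i \la})=0$.

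The plan is to transfer this vanishing from $\s_{n-1}$ up to $\s_n$: apply $e_i$ to a hypothetical non-split self-extension $0\to D^\la\to E\to D^\la\to 0$ and argue that the resulting sequence $0\to e_i D^\la\to e_i E\to e_i D^\la\to 0$ must split. In the favourable case $\eps_i(\la)=1$, $e_i D^\la$ is self-dual with simple head $D^{\te_i \la}$, and standard techniques going back to Kleshchev--Sheth show that such a sequence is pulled back from an extension of $D^{\te_i \la}$ by itself, hence splits by the inductive hypothesis. This route handles the Jantzen--Seitz-like partitions and is presumably the backbone of what the present paper refines.

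When $\eps_i(\la)\geq 2$ for every $i$ with $\eps_i(\la)>0$, the situation is much more delicate: $e_i D^\la$ is no longer uniserial, and the splitting of $e_i E$ does not reduce to a single $\Ext^1$ on $\s_{n-1}$. The approach is to combine several residues, to invoke the divided power functors $e_i^{(r)}$, and to exploit the affine Hecke (equivalently KLR) action on $\Res D^\la$ in order to pin down the structure of $e_i E$ more finely; one can also try to couple this with the condition $p\geq 3$, which forbids certain Specht/dual-Specht pairings that cause pathologies when $p=2$. The main obstacle is precisely this case: the inductive hypothesis controls only $\Ext^1$ between identical composition factors, whereas $e_i E$ may produce extensions between distinct composition factors of $e_i D^\la$, which are genuinely new data. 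A full resolution will likely require either a new vanishing identity inside the KLR algebra $R_n$, or a passage through the Schur functor to the rational representation theory of $\GL_N$, where self-extensions are controlled by good filtrations on Weyl modules and the use of odd characteristic can be leveraged more directly.
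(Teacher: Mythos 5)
This statement is the open conjecture that motivates the paper; the paper does not prove it, and neither do you. Your text is a strategy sketch rather than a proof: you explicitly concede in the final paragraph that the case where every residue $i$ with $\eps_i(\la)>0$ has $\eps_i(\la)\geq 2$ is unresolved and ``will likely require'' new input. That is a genuine gap, not a detail, so the proposal cannot be accepted as a proof of the conjecture. The honest conclusion is that you have reproduced the standard reduction framework (which is also the framework of the paper's Section 4) without closing it.

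Moreover, even your ``favourable case'' $\eps_i(\la)=1$ is not actually settled by the argument you outline. Applying $e_i$ to a non-split extension and invoking the inductive hypothesis only yields, via adjunction and the long exact sequence, an isomorphism of the form
\[
\Ext^1_{\s_n}(D^\la,D^\la)\cong\Hom_{\s_n}\bigl(D^\la,\,f_i^{(\eps_i(\la))}D^{\te_i^{\eps_i(\la)}\la}/D^\la\bigr),
\]
which is precisely \eqref{E131124} in the paper. One must still show that $D^\la$ does not occur in the relevant layer of $f_i^{(\eps_i(\la))}D^{\te_i^{\eps_i(\la)}\la}$ above its socle, and this is exactly the hard combinatorial/module-theoretic content: \cref{T081223_2} and \cref{C081223} of the paper show that such an occurrence forces very restrictive shapes on $\la$, but they do not rule it out in general, even when $\eps_i(\la)=1$. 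The Kleshchev--Sheth techniques you cite handle only $h(\la)\leq p-1$, not arbitrary partitions with $\eps_i(\la)=1$. So both branches of your case division are open; what the paper actually contributes are the partial results \cref{thma}, \cref{thmc} and \cref{thmb}, which your sketch does not recover.
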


This conjecture has been proved in specific cases. Before stating the known cases we need some notation. Let $\Par_n$ denote the set of all partitions of $n$. For $\la$ in $\Par_n$ we let $h(\la)$ denote the number of parts of $\la$ and $S^\la$ be the Specht module indexed by $\la$. If $\la\in\Parres_n$ and $i\in I$ we let $\eps_i(\la)$ resp. $\phi_i(\la)$ denote the number of normal resp. conormal nodes of $\la$ of residue $i$ (see \S\ref{subsec:tran.fun} or \cite[\S11.1]{K3} for definitions). For $0\leq a\leq\eps_i(\la)$ resp. $0\leq b\leq\phi_i(\la)$ we let $e_i^{(a)}D^\la$ resp. $f_i^{(b)}D^\la$ be the divided power functors describing restrictions resp. inductions to smaller resp. larger symmetric groups and let $\tilde e_i^a\la$ resp. $\tilde f_i^b\la$ to be the partition obtained from $\la$ by removing the $a$ bottom $i$-normal nodes resp. adding the $b$ top $i$-conormal nodes of $\la$ (all these definitions are also given in \S\ref{subsec:tran.fun} or \cite[\S11.1]{K3}).

Let $\la\in\Parres_p(n)$. In \cite[Theorem 2.10]{KS} it was proved that if $h(\la)\leq p-1$ then $D^\la$ has no non-trivial self-extension. This result was extended to $h(\la)\leq p+2$ in \cite[Theorem C]{GKM}. In \cite[Theorem 3.3(c)]{KN} it was similarly proved that $D^\la$ has no non-trivial self-extension if $p\geq 5$ $D^\la\cong S^\nu$ is an irreducible Specht module (for some partition $\nu\in\Par_n$ possibly different from $\la$). This second result was extended to $p\geq 3$ and improved to $D^{\tilde e_i^{\eps_i(\la)}}$ being an irreducible Specht module in \cite[Theorem D]{GKM}. Further in \cite[Theorems A, B]{GKM} the conjecture was proved for all irreducible modules in certain blocks, namely either RoCK blocks or blocks of weight $\leq 7$.

In the present paper we extend the known cases to include one further family. This family can be viewed as a generalisation of both irreducible Specht modules as well as of modules in RoCK blocks. Before stating this result, we need some more notation. We assume knowledge about the abacus (see \S\ref{subsec:comb} or \cite[\S2.7]{JK}). Let $\la$ be a partition and fix an abacus configuration $\Ga(\la)$ such that no runner of $\Ga(\la)$ is empty. Let $\rho$ be the $p$-core of $\la$ and $\Ga(\rho)$ be the abacus configuration of $\rho$ obtained by moving all beads of $\Ga(\la)$ as high as possible in their runners. For every runner $i$, let $r_i$ be the number of beads on runner $i$ of $\Ga(\la)$ or $\Ga(\rho)$, $m_i$ to be the largest bead on runner $i$ of $\Ga(\rho)$ and $\la^{(i)}$ be the partition on runner $i$ of $\Ga(\la)$. We say that $\la$ is $p$-separated if for every pair $(i,j)$ of runners with $m_i>m_j$ we have that
\[h(\la^{(i)})+\la^{(j)}_1\leq\left\{\begin{array}{ll}
r_i-r_j,&i<j,\\
r_i-r_j+1,&i>j.
\end{array}\right.\]
The above condition is equivalent to saying that, in $\Ga(\la)$, the first gap on runner $i$ comes after the last bead on runner $j$ whenever $m_i>m_j$ (the corresponding positions could though be on the same row of the abacus if $i>j$).

Using this notation we are now ready to state Theorem \ref{thma}.

\begin{MainTheorem}\label{thma}
Let $p\geq 3$, $\al\in\Par_n$ and $\la\in\Parreg_n$. If $\al$ is $p$-separated and $[S^\al:D^\la]>0$ then $\Ext^1(D^\la,D^\la)=0$.
\end{MainTheorem}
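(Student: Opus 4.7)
The plan is to proceed by induction on $n$, using the crystal operators $\tilde e_i$ together with the divided power functors $e_i^{(a)}$ to reduce the problem to the case where $\la$ is a $p$-core, at which point $D^\la\cong S^\la$ is an irreducible Specht module and $\Ext^1(D^\la,D^\la)=0$ follows from \cite[Theorem D]{GKM}. The strategy parallels the one used there, but now requires genuinely new combinatorial input about the abacus in order to handle the broader class of $p$-separated partitions.

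For the inductive step, assume $\la$ is not a $p$-core. Then there exists $i\in I$ with $\eps_i(\la)=:a>0$. Set $\mu:=\tilde e_i^a\la$. Using the standard Kleshchev-type properties of $e_i^{(a)}D^\la$ (namely that $D^\mu$ appears as its simple head with multiplicity one, and the endomorphism ring is one-dimensional), I would deduce an inequality of the form $\dim\Ext^1_{\F\s_n}(D^\la,D^\la)\leq\dim\Ext^1_{\F\s_{n-a}}(D^\mu,D^\mu)$, thereby reducing the problem to proving $\Ext^1(D^\mu,D^\mu)=0$.

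It then remains to show that the inductive hypothesis applies at the level of $\mu$. Specifically, I would prove the combinatorial lemma: if $\al$ is $p$-separated with $[S^\al:D^\la]>0$ and $\eps_i(\la)=a>0$, then there exists a $p$-separated partition $\be\in\Par_{n-a}$ with $[S^\be:D^\mu]>0$. The candidate $\be$ should be built from the abacus $\Ga(\al)$ by sliding up $a$ beads corresponding to carefully chosen $i$-nodes of $\al$, ensuring that the defining inequalities
\[h(\be^{(i)})+\be^{(j)}_1\leq r_i-r_j\quad\text{or}\quad r_i-r_j+1\]
of $p$-separation remain valid for the new abacus configuration. Positivity $[S^\be:D^\mu]>0$ can then be checked using the Specht filtration of $\Res^{\s_n}_{\s_{n-1}}S^\al$ (whose factors are indexed by the partitions obtained from $\al$ by removing a single node), applied $a$ times to the $i$-part, together with the surjection $e_i^{(a)}D^\la\twoheadrightarrow D^\mu$.

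The main obstacle will be this combinatorial lemma. The removable $i$-nodes of $\al$ need not correspond directly to the bottom $i$-normal nodes of $\la$, so one must use the constraint $[S^\al:D^\la]>0$ to extract enough compatibility between the abacus shapes of $\al$ and $\la$ to produce a suitable $\be$. A careful case analysis on which runners contain the relevant $i$-nodes, together with their relative positions in the ordering imposed by $p$-separation (i.e.\ the values $m_i$ and the parameters $r_i$), will likely be needed, and the cases $i<j$ versus $i>j$ in the defining inequalities will probably have to be treated separately. Once this lemma is established, \Cref{thma} follows by strong induction on $n$, with the $p$-core case providing the base.
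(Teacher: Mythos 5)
There is a genuine gap at the heart of your inductive step. The inequality $\dim\Ext^1_{\s_n}(D^\la,D^\la)\leq\dim\Ext^1_{\s_{n-a}}(D^\mu,D^\mu)$ for $\mu=\tilde e_i^{a}\la$, $a=\eps_i(\la)$, does not follow from the standard properties of $e_i^{(a)}D^\la$ and is false in general: if it held, the whole conjecture would follow by trivial induction on $n$ (and, since nothing in your argument uses $p\geq3$, it would also contradict the known examples of non-trivial self-extensions in characteristic $2$). What is actually true is that, \emph{when} $\Ext^1(D^\mu,D^\mu)=0$, the long exact sequence attached to $0\to D^\la\to f_i^{(a)}D^\mu\to Q\to 0$ gives $\Ext^1(D^\la,D^\la)\cong\Hom(D^\la,Q)$, i.e.\ self-extensions of $D^\la$ are controlled by copies of $D^\la$ in the second socle layer of $f_i^{(a)}D^\mu$ (which occurs with multiplicity $\binom{\phi_i(\mu)}{a}>1$ in general). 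Killing that Hom space is precisely the hard part, and your proposal gives no mechanism for it; consequently the base case ($p$-cores) and the combinatorial lemma about producing $\be$ from $\al$, even if established, would not assemble into a proof.

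The paper's route is different and avoids this problem. The $p$-separation hypothesis is used to guarantee that for a suitable pair of adjacent runners $j-1,j$ of $\Ga(\al)$ one has $\eps_{i_j}(\nu)=0$ for \emph{every} composition factor $D^\nu$ of $S^\al$, and moreover $\phi_{i_j}(\nu)=\phi'_{i_j}(\al)$. Applying the full divided power $f_{i_j}^{(\phi_{i_j}(\nu))}$ then sends each $D^\nu$ to an irreducible module (Lemma \ref{lem:res.irre}(iii)) and $S^\al$ to a single Specht module $S^{\mu}$ obtained by swapping the two runners; by \cite[Lemma 2.22]{GKM} this preserves the existence of self-extensions among composition factors, and the swapped configuration is again $p$-separated. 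Iterating the runner swaps of Fayers' argument lands $S^\al$ in a RoCK block, where \cite[Theorem A]{GKM} applies. So the reduction is a Scopes-type equivalence toward RoCK blocks, not a restriction toward the $p$-core, and the only functors used are the ``maximal'' divided powers for which induction/restriction of simples stays simple.
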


Though the conjecture is not yet proved in general, in \cite[Theorem E, Lemma 4.8]{GKM} some reduction results are obtained, limiting minimal partitions $\la$ such that $D^\la$ has non-trivial self-extensions. The next result is an improved version of \cite[Theorem E]{GKM}. In the theorem, for $\la$ a partition, $E_1,\ldots,E_k$ removable nodes of $\la$ and $F_1,\ldots,F_h$ addable nodes of $\la$, we let $\la^{F_1,\ldots,F_h}_{E_1,\ldots,E_k}$ be the partition obtained from $\la$ by removing the nodes $E_1,\ldots,E_k$ and adding the nodes $F_1,\ldots,F_h$.

\begin{MainTheorem}\label{T081223_2}\label{thmc}
Let $\la\in\Par_p(n)$ and $i\in I$. Assume that $V\cong D^\la|D^\la\subseteq f_i^{(\eps_i(\la))}D^{\tilde e_i^{\eps_i(\la)}\la}$ and let $b$ minimal with $V\subseteq f_i^{(b)}D^{\tilde e_i^{b}\la}$. Then $1\leq b\leq\min\{\eps_i(\la),\phi_i(\la)\}$.

Further if $B_1,\ldots,B_{\eps_i(\la)}$ are the $i$-normal nodes of $\la$ counted from the bottom and $C_1,\ldots,C_{\phi_i(\la)}$ are the $i$-conormal nodes of $\la$ counted from the top then $\la_{B_1,\ldots,B_b}^{C_j}$ and $\la_{B_j}^{C_1,\ldots,C_b}$ are $p$-singular for every $1\leq j\leq b$.
\end{MainTheorem}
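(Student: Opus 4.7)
The bound $b \geq 1$ is immediate, as $V$ has length $2$ while $f_i^{(0)} D^{\tilde e_i^0 \la} = D^\la$ is simple. The bound $b \leq \eps_i(\la)$ is the hypothesis combined with the definition of $b$ as a minimum. The nontrivial bound is $b \leq \phi_i(\la)$, and this is where I would focus the main argument.

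To establish $b \leq \phi_i(\la)$, I would use the biadjunction between $e_i^{(b)}$ and $f_i^{(b)}$ to convert the inclusion $V \hookrightarrow f_i^{(b)} D^{\tilde e_i^b \la}$ into a surjective morphism $\psi \colon e_i^{(b)} V \twoheadrightarrow D^{\tilde e_i^b \la}$. Since $e_i^{(b)}$ is exact, $e_i^{(b)} V$ fits in a short exact sequence $0 \to e_i^{(b)} D^\la \to e_i^{(b)} V \to e_i^{(b)} D^\la \to 0$. The injectivity of $V \hookrightarrow f_i^{(b)} D^{\tilde e_i^b \la}$, together with the minimality of $b$, forces $\psi$ to be ``new'' at level $b$: otherwise the embedding would unwind through the smaller divided power $f_i^{(b-1)} D^{\tilde e_i^{b-1} \la}$. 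Counting the head copies of $D^{\tilde e_i^b \la}$ in $e_i^{(b)} V$ that are compatible with this non-triviality, via the crystal identities linking $\eps_i$ and $\phi_i$ on neighbouring vertices, then yields $\phi_i(\la) \geq b$.

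For the $p$-singularity claims, I would argue by contradiction. Suppose $\la_{B_1,\ldots,B_b}^{C_j}$ is $p$-regular for some $1 \leq j \leq b$. Then the simple module $D^{\la_{B_1,\ldots,B_b}^{C_j}}$ is defined. Tracking its appearance as a composition factor of $f_i^{(b)} D^{\tilde e_i^b \la}$, and using that $\la_{B_1,\ldots,B_b}^{C_j}$ is obtained from $\tilde e_i^b \la$ by adding the $i$-conormal node $C_j$ of $\la$, I would factor the embedding $V \hookrightarrow f_i^{(b)} D^{\tilde e_i^b \la}$ through $f_i^{(b-1)} D^{\tilde e_i^{b-1} \la}$, contradicting the minimality of $b$. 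The corresponding statement for $\la_{B_j}^{C_1,\ldots,C_b}$ should follow symmetrically, most naturally by applying Mullineux duality to swap the roles of $i$-normal and $i$-conormal nodes.

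The main obstacle is translating the abstract minimality of $b$ into concrete combinatorial $p$-singularity statements. This demands precise control over the layers of $f_i^{(b)} D^{\tilde e_i^b \la}$ described via the $i$-conormal nodes of $\tilde e_i^b \la$ (which include the positions $B_1, \ldots, B_b$, now re-exposed as addable) and a case-by-case matching of each potential factorization of the embedding with one of the partitions $\la_{B_1,\ldots,B_b}^{C_j}$ or $\la_{B_j}^{C_1,\ldots,C_b}$. I expect this combinatorial analysis, refining \cite[Lemma 4.8]{GKM}, to be the technical heart of the proof.
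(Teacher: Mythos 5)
Your proposal correctly disposes of the easy parts ($b\geq 1$, and $b\leq\eps_i(\la)$ from the hypothesis plus minimality), but both substantive claims are left as sketches that do not match how the argument actually has to go, and in each case the missing step is the real content. For $b\leq\phi_i(\la)$: the surjection $e_i^{(b)}V\twoheadrightarrow D^{\tilde e_i^b\la}$ obtained by adjunction only sees $\eps_i$-data, and ``counting head copies via crystal identities'' is not an argument that produces the bound. What is actually needed is the isomorphism $f_i^{(\eps_i(\la))}D^{\tilde e_i^{\eps_i(\la)}\la}\cong e_i^{(\phi_i(\la))}D^{\tilde f_i^{\phi_i(\la)}\la}$ of \cite[Lemma 3.4]{M1}, which shows $V$ is also a \emph{quotient} of $e_i^{(\phi_i(\la))}D^{\tilde f_i^{\phi_i(\la)}\la}$; one then takes $c$ minimal with $V$ a quotient of $e_i^{(c)}D^{\tilde f_i^c\la}$ and proves $b=c$ by comparing $\dim\Hom_{\s_{n+c}}(D^{\tilde f_i^c\la},f_i^{(c)}V)=2$ with $\dim\Hom_{\s_{n+c}}(D^{\tilde f_i^c\la},f_if_i^{(b+c-1)}D^{\tilde e_i^b\la})$, using the embedding $f_i^{(c-1)}V\subseteq f_i^{(b+c-1)}D^{\tilde e_i^b\la}$ and socle control from \cref{lem:res.irre}; this yields $2c\leq b+c$, hence $c\leq b$, and symmetrically $b\leq c$. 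Since $e_i^{(c)}D^{\tilde f_i^c\la}\neq 0$ forces $c\leq\phi_i(\la)$, the bound follows. None of this mechanism is visible in your sketch.

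For the $p$-singularity statements, your plan is to derive a contradiction with the minimality of $b$ by factoring the embedding through $f_i^{(b-1)}D^{\tilde e_i^{b-1}\la}$; that is not where the contradiction comes from, and you explicitly defer the ``technical heart''. The actual argument computes $\dim\Hom_{\s_{n+b-1}}(e_iD^{\tilde f_i^b\la},f_i^{(b-1)}V)=2b$, invokes the chain of submodules $W_1\subseteq\cdots\subseteq W_{\eps_i(\la)+b}$ of $e_iD^{\tilde f_i^b\la}$ from \cite[Lemma 3.9]{KMT} together with \cite[Lemma 2.6]{GKM} to conclude that $W_{2b}\subseteq f_i^{(b-1)}V$, and deduces that if $\la_{B_1,\ldots,B_b}^{C_j}$ were $p$-regular then $D^{\la_{B_1,\ldots,B_b}^{C_j}}$ would be a composition factor of $f_i^{(b-1)}D^\la$; by \cite[Corollary 2.26]{GKM} this forces $\la_{B_1,\ldots,B_b}^{C_j}\unrhd\la_{D_1,\ldots,D_{b-1}}$ for some $i$-removable nodes $D_k$ of $\la$, which is refuted by an explicit row-sum comparison below the row of $B_1$. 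The second family is handled by the dual computation with $e_i^{(b-1)}V^*$ and \cite[Lemma 3.11]{KMT}, not by Mullineux duality. So both halves of your proposal have genuine gaps precisely at the points you flag as remaining to be done.
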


The final main result of this paper also gives conditions on minimal partitions $\la$ such that $D^\la$ has non-trivial self-extensions.

\begin{MainTheorem}\label{thmb}
Let $p\geq 3$ and $\la\in\Parreg_n$. If $\la$ is minimal such that $D^\la$ has non-trivial self-extensions, then all normal nodes of $\la$ are of the same residue.
\end{MainTheorem}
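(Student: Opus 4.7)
The plan is to argue by contradiction. Suppose $\la\in\Parres_n$ is minimal with $\Ext^1(D^\la,D^\la)\neq 0$, and assume that $\la$ has normal nodes of two distinct residues $i\neq j$, so that $\eps_i(\la),\eps_j(\la)>0$. Fix a non-split self-extension $V\cong D^\la|D^\la$.

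The first step is to reduce to the setting of Theorem~\ref{thmc}. For each residue $r$ with $\eps_r(\la)>0$, the partition $\mu_r:=\tilde e_r^{\eps_r(\la)}\la$ is $p$-regular and strictly smaller than $\la$, so by minimality of $\la$ the module $D^{\mu_r}$ has no non-trivial self-extension. A standard adjunction argument in the spirit of \cite[Lemma~4.8]{GKM}, exploiting that $D^\la$ occurs as the simple head and socle of $f_r^{(\eps_r(\la))}D^{\mu_r}$, then forces an embedding $V\hookrightarrow f_r^{(\eps_r(\la))}D^{\mu_r}$. Applied to both $r=i$ and $r=j$, this places $V$ inside two such divided-power induced modules and meets the hypothesis of Theorem~\ref{thmc} at each residue.

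Invoking Theorem~\ref{thmc} for $r=i$ and $r=j$ in turn then supplies integers $1\leq b_r\leq\min\{\eps_r(\la),\phi_r(\la)\}$ together with $p$-singularity conditions on all partitions $\la^{C_k^{(r)}}_{B_1^{(r)},\dots,B_{b_r}^{(r)}}$ and $\la^{C_1^{(r)},\dots,C_{b_r}^{(r)}}_{B_k^{(r)}}$, where $B_\bullet^{(r)}$ and $C_\bullet^{(r)}$ denote the $r$-normal and $r$-conormal nodes of $\la$ in the orderings fixed in Theorem~\ref{thmc}. In particular $\phi_i(\la),\phi_j(\la)\geq 1$, and we obtain a family of singularity constraints indexed simultaneously by both residues.

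The final and most delicate step is to turn these constraints into a contradiction. I would translate each singularity condition into the abacus $\Ga(\la)$: a partition is $p$-singular precisely when two of its parts coincide, a condition localised on a pair of consecutive runners. The $r$-normal and $r$-conormal nodes of $\la$ correspond to bead moves between runners $r-1$ and $r$, so the singularity of $\la^{C_1^{(r)}}_{B_1^{(r)}}$ pins down a specific configuration on those two runners. Because $i\neq j$, the pairs $(i-1,i)$ and $(j-1,j)$ affect different (or only partially overlapping) runners, and the plan is to carry out a case analysis on the relative positions of these runners to rule out every abacus that simultaneously realises both families of singular configurations while remaining $p$-regular. This combinatorial case analysis on the abacus is the main obstacle; once it is executed, the contradiction forces $\la$ to admit normal nodes of only one residue, which is precisely the assertion of Theorem~\ref{thmb}.
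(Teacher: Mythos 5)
Your first step --- embedding $V\cong D^\la|D^\la$ into $f_r^{(\eps_r(\la))}D^{\tilde e_r^{\eps_r(\la)}\la}$ for each residue $r$ with $\eps_r(\la)>0$, using minimality of $\la$ together with the long exact sequence attached to the inclusion $D^\la\subseteq f_r^{(\eps_r(\la))}D^{\tilde e_r^{\eps_r(\la)}\la}$ --- is exactly how the paper begins (the paper also records, via duality, that $V$ is a \emph{quotient} of $f_j^{(\eps_j(\la))}D^{\tilde e_j^{\eps_j(\la)}\la}$, which you omit but will need below). After that the two arguments part ways, and yours has a genuine gap: the entire contradiction is deferred to an unexecuted ``combinatorial case analysis on the abacus'' of the singularity constraints produced by \cref{thmc} at the two residues. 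Nothing in your sketch shows that the two families of constraints are incompatible; each family separately is certainly satisfiable (\cref{C081223} describes exactly the partitions satisfying one of them), so the whole burden of the proof sits in the step you have not done, and it is far from routine. Your translation of the constraints is also wrong at one point: a partition is $p$-singular when some part repeats $p$ or more times, not merely twice, so the ``localised on a pair of consecutive runners'' picture does not hold as stated.

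The paper avoids all of this. Having placed $V$ as a submodule of $f_i^{(\eps_i(\la))}D^{\tilde e_i^{\eps_i(\la)}\la}$ and as a quotient of $f_j^{(\eps_j(\la))}D^{\tilde e_j^{\eps_j(\la)}\la}$, it concludes that
\[\dim\Hom_{\s_n}\bigl(f_j^{(\eps_j(\la))}D^{\tilde e_j^{\eps_j(\la)}\la},\,f_i^{(\eps_i(\la))}D^{\tilde e_i^{\eps_i(\la)}\la}\bigr)\geq\dim\End_{\s_n}(V)=2,\]
which contradicts \cref{L081223}: for $i\neq j$ such Hom-spaces have dimension at most $1$, a fact proved by an inductive adjunction argument rather than by abacus combinatorics. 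Note that \cref{thmc} plays no role in the proof of \cref{thmb}; it is a separate reduction result relevant precisely when all normal nodes \emph{do} share a residue. To salvage your route you would have to actually establish the combinatorial incompatibility, which does not look easier than proving \cref{L081223}.
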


Theorem \ref{thma} will be proved in Section \ref{separated}, while Theorems \ref{thmc} and \ref{thmb} in Section \ref{sreductions}.

\section{Background}

\subsection{Submodule structure}

In Section \ref{sreductions} we will be studying filtrations of certain modules. In this view we introduce the following notation. If $V,W_1,\ldots,W_h$ are  $\F G\mods$ for some group $G$, we write $V\sim W_1|\ldots|W_h$ if $V$ has a filtration $0=V_0\subseteq V_1\subseteq\ldots\subseteq V_H=V$ with $V_i/V_{i-1}\cong W_i$ for each $1\leq i\leq h$. If $D_1,\ldots,D_h$ are irreducible $\F G\mods$ and $V\sim D_1|\ldots|D_h$ is uniserial, we also write  $V\cong D_1|\ldots|D_h$.

\subsection{Partitions and abaci}\label{subsec:comb}

We will now introduce multiple combinatorial notations that will be used in this paper.

Let $\la$ be a partition. The transpose partition is denoted $\la'$. We denote $|\la|:=\la_1+\la_2+\ldots$. We further denote $h(\la)$ the height of $\la$, that is the number of non-zero parts of $\la$. For a given $n\in\Z_{>0}$, we denote $\Par_n$ to be the set of all partitions of $n$. We denote by $\varnothing$ the trivial partition of $0$, thus $\Par_0=\{\varnothing\}$.  We denote by $\unrhd$ the dominance order on partitions.

A partition $\la$ is called {\em $p$-regular} if none of its parts is repeated $p$ or more times. We denote the sets of $p$-regular partitions of $n$ by $\Parreg_n$. A partition $\la$ that is not $p$-regular is  called \emph{$p$-singular}. 

The elements of $\Z_{>0}\times \Z_{>0}$ are called {\em nodes}. We identify a partition $\la$ with its  \emph{Young diagram}, that is its set of nodes $\{(k,l)\in \Z_{>0}\times \Z_{>0}\mid l \leq \la_k\}$. 

We set $I:=\Z/p\Z$, which we usually identify with $\{0,1,\dots,p-1\}$.
Let $A=(k,l)$ be a node. The \emph{residue} of $A$ is $\res A:=l-k\pmod{p}\in I$. 

The {\em residue content} of a partition $\la\in\Par_n$ is $\cont(\la):=(a_i)_{i\in I}\in\Z^I$, where, for every $i\in I$, $a_i=a_i^\la$ is the number of nodes of $\la$ of content $i$.
 
If $\la\in \Par_n$, then a node $A\in \la$ (resp $B\not\in \la$) is called \emph{removable} (resp. {\em addable}) for $\la$ if  
$\la_A:=\la\setminus\{A\}$ (resp. $\la^B:=\la\cup\{B\}$) is the Young diagram of a partition. Fix $i\in I$. 
A removable (resp. addable) node is called {\em $i$-removable} (resp. {\em $i$-addable}) if it has residue $i$. 
More generally, if $A_1,\ldots, A_k$ are removable (resp. addable) nodes of $\lambda$, we denote $\la_{A_1,\dots,A_k}:=\la\setminus\{A_1,\dots,A_k\}$ (resp. $\la^{A_1,\dots,A_k}:=\la\cup\{A_1,\dots,A_k\}$). 

A beta-set $\Ga$ is a finite subset of $\Z_{\geq 0}$. If $\Ga=\{a_1,\ldots,a_h\}$ with $a_1>\ldots>a_h$, then we say that $\Ga$ is a beta-set of the partition
\[\la(\Ga):=(a_1,a_2-1,\ldots,a_h-h+1).\]
For $\la$ a partition we denote by $\Ga(\la)$ any beta-set of $\la$. We will identify beta-sets with their abacus displays. We will use notation as in \cite[\S2.7]{JK}. Positions on the abacus are labeled with non-negative integers, so that for $i\in I$, the positions $\{i+pa\mid a\in\Z_{\geq 0}\}$ form the runner $i$ of the abacus. The abacus is displayed a set of $p$ columns, each corresponding to a runner, starting with runner $0$ on the left. On each runner, entries increase when moving down rows. A beta-set $\Ga$ can be pictured on the abacus by adding a bead on each entry corresponding to an element of $\Ga$.

A position $k>0$ in $\Ga(\la)$ is {\em removable} (resp. {\em addable}) if it is occupied (resp. unoccupied) and position $k-1$ is unoccupied (resp. occupied). Let $\Ga(\la)=\{a_1,\ldots,a_h\}$ with $a_1>\ldots>a_h$. If $k=a_i$ for some $i$, then $k$ is removable for $\Ga(\la)$ if and only if the node $A_i:=(i,\la_i)$ is removable for $\la$ and in this case $(\Ga(\la)\setminus\{k\})\cup\{k-1\}=\Ga(\la_{A_i})$. Similarly, if $k=a_i+1$ for some $i$, then $k$ is addable for $\Ga(\la)$ if and only if the node $B_i:=(i,\la_i+1)$ is addable for $\la$ and in this case $(\Ga(\la)\setminus\{k-1\})\cup\{k\}=\Ga(\la^{B_i})$. Removable or addable nodes have the same residue if and only if the corresponding removable or addable positions are on the same runner. We denote by $i_j(\Ga)$ the residue of the removable/addable nodes of $\la$ corresponding to removable/addable positions on runner $j$. 

Recall that, for $\la$ a partition, $\Ga(\la)$ is not unique and depends on the number of beads chosen. We will always assume that $0\in\Ga(\la)$. If arguments require to modify $\la$ (or $\Ga(\la)$) by (recursively) removing/adding nodes, we will also assume that $\Ga(\la)$ is chosen so that $0$ is occupied at each corresponding step. This can be easily achieved by adding multiple full rows of beads at the top of the abacus.

We will also need the notions of \emph{core}, \emph{quotient} and \emph{weight} of a partition. The notions of core and weight agree with those used in \cite[\S2.7]{JK}. The notion of weight is however slightly different. Again, let $\la$ be a partition and $\Ga(\la)$ be an abacus configuration of $\la$.

Let $\Lambda$ be the abacus configuration obtained by moving all beads in $\Ga(\la)$ as high as possible within each runner without overstacking beads. The $p$-core (or simply core if $p$ is understood) is the partition $\core(\la)$ with $\Lambda=\Ga(\core(\la))$.

Next we define the $p$-quotient (or simply quotient). Let $0\leq i<p$ and consider runner $i$ of $\Ga(\la)$. Through the bijection $\{i+pc|c\in\Z_{\geq 0}\}\to\Z_{\geq 0}$ given by $i+pc\to c$, we can view runner $i$ as a beta-set of a certain partition $\la^{(i)}$. The quotient of $\la$ is then given by the tuple of partitions $\quot(\la)=(\la^{(0)},\ldots,\la^{(p-1)})$. Note that unlike in \cite[\S2.7]{JK} we make no assumption on the number of beads in $\Ga(\la)$ being divisible by $p$. So the quotient of $\la$ is not well defined. It is however well-defined once a specific abacus configuration is chosen.

The $p$-weight of $\la$ (or weight) is then given by $\wt(\la)=\sum_{i\in I}|\la^{(i)}|$.

Let $\rho\in\Par_r$ be a core, $d\in\Z_{\geq 0}$, and set $n:=r+pd$. We denote by $\Par_{\rho,d}$ the set of all partitions of $n$ with core $\rho$ and weight $d$ and by $\Parres_{\rho,d}:=\Par_{\rho,d}\cap\Parres_n$ the corresponding set of $p$-regular partitions.

\subsection{Representations of symmetric groups}\label{SSSG}

It is well known that irreducible representations of symmetric groups are labeled by partitions in characteristic $0$ and by $p$-regular partitions in characteristic $p$, see for example \cite{JamesBook}. For $\la\in\Par_n$ we thus denote by $S^\la$ the Specht module indexed by $\la$, see \cite[\S4]{JamesBook}. In characteristic $0$ these modules form the set of irreducible representations of $\s_n$, but we will identify the Specht modules $S^\la$ also with their reductions modulo $p$. For $\mu\in\Parreg_n$ we let $D^\mu$ denote the corresponding irreducible representation of $\s_n$ in characteristic $p$, see \cite[\S11]{JamesBook}. By \cite[Theorem 11.5]{JamesBook}, $D^\mu$ is the head of $S^\mu$ whenever $\la$ is $p$-regular.

Let $\rho\in\Par_r$ be a core, $d\in\Z_{\geq 0}$, and $n=r+dp$. We denote by $B_{\rho,d}$ the block of the symmetric group algebra $\F\s_{n}$ corresponding to $\rho$, cf. \cite[6.1.21]{JK}. 
The irreducible  $B_{\rho,d}$-modules are $\{D^\mu\mid \mu\in \Parreg_{\rho,d}\}$, cf. \cite[7.1.13, 7.2.13]{JK}. Further $S^\la$ also belong to the block $B_{\rho,d}$ for all $\la\in\Par_{\rho,d}$. 

By \cite[Theorem 2.7.41]{JK}, it follows that $D^\la$ and $D^\mu$ are in the same block if and only if $\cont(\la)=\cont(\mu)$. For $(a_i)\in\Z^I$ we will thus write $B_{(a_i)}$ for the block corresponding to partitions with content $(a_i)$, if such a block exist. If no such block exist we define $B_{(a_i)}:=0$. We also define $b_{(a_i)}$ to be the corresponding block idempotent (or $0$ if $B_{(a_i)}=0$).

With this notation $\F\s_n=\oplus_{(a_i)\in\Z^I}B_{(a_i)}=\oplus_{(a_i)\in\Z^I}b_{(a_i)}\F\s_n$.


\subsection{Translation functors}
\label{subsec:tran.fun}

Let $\la$ be a partition. For a residue $i\in I$, we denote
\begin{align*}
 \beps_i(\la):=\sharp\{i\text{{-removable nodes of}}\ \la\}\quad\text{and}\quad
 \bphi_i(\la):=\sharp \{i\text{{-addable nodes of}}\ \la\}.
\end{align*}

We will now introduce normal and conormal nodes, using the notation from \cite[\S11.1]{K3}. Label the $i$-addable
nodes of $\la$ with $+$ and the $i$-removable nodes of $\la$ with $-$. The {\em $i$-signature} of 
$\la$ is the sequence of pluses and minuses obtained by going along the 
rim of the Young diagram from bottom left to top right and reading off
all the signs.
The {\em reduced $i$-signature} of $\la$ is obtained 
from the $i$-signature
by successively deleting all neighboring 
pairs of the form $-+$. 
The nodes corresponding to  $-$'s (resp. $+$'s) in the reduced $i$-signature are
called {\em $i$-normal} (resp. {\em $i$-conormal}) for $\la$.
The leftmost $i$-normal (resp. rightmost $i$-conormal) node is called {\em $i$-good} (resp. {\em $i$-cogood}) for $\la$. 
We write
\begin{align*}
\eps_i(\la):=\sharp\{i\text{{-normal nodes of}}\ \la\}\quad\text{and}\quad
 \phi_i(\la):=\sharp \{i\text{{-conormal nodes of}}\ \la\}.
\end{align*}
 
Let $\la\in \Parreg_n$ and $i\in I$. Let $A_1,A_2,\ldots,A_{\eps_i(\la)}$ (resp. $B_1,B_2,\ldots,B_{\phi_i(\la)}$) be the $i$-normal (resp. $i$-conormal) nodes for $\la$, labelled from bottom to top (resp. from  top to bottom). We define 
\begin{equation*}
\te^r_i\la:=\la_{A_1,\ldots,A_r},\quad \tf^r_i\la:=\la^{B_1,\ldots,B_r}.
\end{equation*}
We interpret $\te^r_i\la$ (resp. $\tf^r_i\la$) as $0$ if $r>\eps_i(\la)$ (resp. $r>\phi_i(\la)$). It can be checked combinatorially that the partitions $\te^r_i\la$ and $\tf^r_i\la$ are $p$-regular if $r\leq\eps_i(\la)$ resp. $r\leq\phi_i(\la)$. Moreover $\te_i^r\tf_i^r\la=\la$ (resp. $\tf_i^r\te_i^r\la=\la$) for $r\leq \phi_i(\la)$ (resp. $r\leq \eps_i(\la)$).

Similarly, if $C_1,C_2,\ldots,C_{\beps_i(\la)}$ (resp. $D_1,D_2,\ldots,D_{\bphi_i(\la)}$) are the $i$-removable (resp. $i$-addable) nodes for $\la$, then we define
\begin{equation*}
\he^{\beps_i(\la)}_i\la:=\la_{A_1,\ldots,A_{\beps_i(\la)}},\quad \hf^{\bphi_i(\la)}_i\la:=\la^{B_1,\ldots,B_{\bphi_i(\la)}}.
\end{equation*} 

We now define the {\em $i$-induction} and {\em $i$-restriction}  (translation) functors. Further details can be found in \cite[\S11.2]{K3}. 
Let $j\in I$, $r\in\Z_{\geq 0}$, $(a_i)\in\Z^I$ correspond to a block of $\F\s_n$ and  $V$ be a module over the block $B_{(a_i)}$. Viewing $V$ to a $\F\s_n$-module, define
\begin{align*}
e_i^{(r)}V&:= b_{(a_i)-r\de_j}(V \da^{\s_n}_{\s_{n-r}\times\s_r})^{\s_r}\in\mod{B_{(a_i)-r\de_j}},\\  
f_i^{(r)}V&:= b_{(a_i)+r\de_j}(V\boxtimes\F{\s_r})\ua_{\s_n\times\s_r}^ {\s_{n+r}}\in\mod{B_{(a_i)+r\de_j}},
\end{align*}
with $\de_j=(0,\ldots,0,1,0,\ldots,0)$, with the coefficient $1$ being the coefficient corresponding to $j\in I$.

The definitions of $e_i^{(r)}V$ and $f_i^{(r)}V$ can be extended to any $\F\s_n$-module $V$ additively, obtain functors
$e_i^{(r)}: \F\s_n\mods \to \F\s_{n-r}\mods$ and $f_i^{(r)}: \F\s_n\mods\to \F\s_{n+r}\mods$. 
We write $e_i:=e_i^{(1)}$ and $f_i:=f_i^{(1)}$. We now state some results about these functors.

\begin{lemma}\cite[(8.7)]{K3}
Let $V\in\mod{\F\s_n}$. Then
\[V\da_{\s_{n-1}}\cong \bigoplus_{i\in I}e_iV\quad\text{and}\quad V\ua^{\s_{n+1}}\cong \bigoplus_{i\in I}f_iV.\]
\end{lemma}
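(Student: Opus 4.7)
The plan is to identify the claimed direct sum with the canonical block decomposition of the restriction (resp. induction), using the characterisation of blocks of symmetric groups by residue content. Both sides of each asserted isomorphism are additive in $V$, so I may reduce to the case where $V$ lies in a single block $B_{(a_i)}$ with $(a_i)\in\Z^I$. Applying the block decomposition $\F\s_{n-1}=\bigoplus_{(b_i)\in\Z^I}B_{(b_i)}$ then gives
\[V\da_{\s_{n-1}}=\bigoplus_{(b_i)\in\Z^I}b_{(b_i)}\bigl(V\da_{\s_{n-1}}\bigr),\]
and analogously for induction to $\s_{n+1}$.

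The second step is to match each non-zero summand on the right with the appropriate $e_j V$ or $f_j V$. Specialising the definition of the functors to $r=1$, the $\s_1$-invariants operation is the identity, so directly from the definition $e_j V=b_{(a_i)-\de_j}(V\da_{\s_{n-1}})$ and dually $f_j V=b_{(a_i)+\de_j}(V\ua^{\s_{n+1}})$. It therefore remains to show that $b_{(b_i)}(V\da_{\s_{n-1}})$ vanishes unless $(b_i)=(a_i)-\de_j$ for some $j\in I$, and symmetrically for induction.

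The final step is content tracking under branching. Every composition factor of $V$ is some $D^\mu$ with $\cont(\mu)=(a_i)$, and since $D^\mu$ is a quotient of $S^\mu$, every composition factor of $D^\mu\da_{\s_{n-1}}$ also appears in $S^\mu\da_{\s_{n-1}}$. The classical branching rule endows $S^\mu\da_{\s_{n-1}}$ with a Specht filtration whose sections $S^\kappa$ are indexed by the partitions $\kappa\subset\mu$ with $|\mu|-|\kappa|=1$, and each such Specht module lies in the block of content $(a_i)-\de_j$, where $j$ is the residue of the removed node. Invoking the block-content correspondence recalled earlier (\cite[Theorem 2.7.41]{JK}), all composition factors of $V\da_{\s_{n-1}}$ lie in blocks of content $(a_i)-\de_j$, so the remaining block projections vanish. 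The induction statement is entirely symmetric, using that adjoining a node of residue $j$ shifts content by $+\de_j$. The only non-trivial ingredient is the content shift under branching; everything else is a direct unpacking of the definitions, so no serious obstacle is expected.
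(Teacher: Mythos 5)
Your proposal is correct: reducing to a single block, identifying $e_jV$ and $f_jV$ with the block projections $b_{(a_i)-\de_j}(V\da_{\s_{n-1}})$ and $b_{(a_i)+\de_j}(V\ua^{\s_{n+1}})$ directly from the definitions at $r=1$, and killing the remaining block components via the classical branching rule together with the block--content correspondence is exactly the standard argument. The paper offers no proof of its own here (it simply cites \cite[(8.7)]{K3}), and your argument is essentially the one underlying that cited result.
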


\begin{lemma}
\label{lem:div.func}
\cite[Lemma 8.2.2(ii), Theorem 8.3.2]{K3}
The functors $e_i^{(r)}$ and $f_i^{(r)}$ are exact, biadjoint and commute with duality. Moreover, $e_i^{r}\cong (e_i^{(r)})^{\oplus r!}$ and $f_i^{r}\cong (f_i^{(r)})^{\oplus r!}$.
\end{lemma}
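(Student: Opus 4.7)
The plan is to deduce the four assertions from the corresponding properties of ordinary restriction and induction on symmetric group modules, together with the internal identification $e_i^r\cong (e_i^{(r)})^{\oplus r!}$. Once that decomposition is in hand, everything else is formal.

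First I would handle the undivided versions. Ordinary restriction $V\mapsto V\da^{\s_n}_{\s_{n-r}\times\s_r}$ and induction $V\mapsto (V\boxtimes \F\s_r)\ua^{\s_{n+r}}_{\s_n\times\s_r}$ are exact, biadjoint via Frobenius reciprocity, and commute with the natural duality on $\F\s_n\mods$. Multiplication by a central block idempotent is exact, compatible with duality, and compatible with the Frobenius adjunction (since the block idempotents on the two sides correspond under the adjunction). Combining these facts shows that $e_i=e_i^{(1)}$ and $f_i=f_i^{(1)}$ are exact, biadjoint, and commute with duality, and hence so are their iterates $e_i^r$ and $f_i^r$.

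Next I would prove the key structural identity $e_i^rV\cong (e_i^{(r)}V)^{\oplus r!}$. The target $e_i^rV$ carries a natural action of $\F\s_r$ commuting with the $\F\s_{n-r}$-action by permutation of the last $r$ restricted letters. On the single block component projected out by $b_{(a_i)-r\de_j}$, each Jucys--Murphy element $L_{n-r+1},\ldots,L_n$ acts with the single generalised eigenvalue $i$, so the combined action of $\F\s_r$ and the JM-elements upgrades to a module for the degenerate affine Hecke algebra of rank $r$ on which each polynomial generator is nilpotent after subtracting $i$. The representation theory of this single-residue block then produces a canonical bimodule summand $e_i^{(r)}V$ of $e_i^rV$ satisfying $e_i^rV\cong (e_i^{(r)}V)^{\oplus r!}$; this is precisely the content of \cite[Theorem 8.3.2]{K3}. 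The functor $f_i^{(r)}$ can either be defined symmetrically via induction or declared to be the biadjoint of $e_i^{(r)}$.

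With the decomposition in place, the remaining properties follow automatically. Exactness of $e_i^{(r)}$ and $f_i^{(r)}$ is inherited from exactness of $e_i^r$ and $f_i^r$ because the divided powers are functorial direct summands. Biadjointness of the pair $(e_i^{(r)},f_i^{(r)})$ is obtained by restricting the unit and counit of the biadjunction $(e_i^r,f_i^r)$ to the canonical summand; the adjunction maps are $\s_r$-equivariant, so descend compatibly. Compatibility with duality is inherited because duality on $\F\s_n\mods$ commutes with the $\s_r$-action used to cut out the summand. The main obstacle is the middle step: the bimodule isomorphism $e_i^rV\cong (e_i^{(r)}V)^{\oplus r!}$ is not formal in positive characteristic and relies on the non-trivial analysis of the affine Hecke algebra action on single-residue blocks imported from \cite[Theorem 8.3.2]{K3}.
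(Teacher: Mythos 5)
This lemma is stated in the paper without proof, being quoted directly from \cite[Lemma 8.2.2(ii), Theorem 8.3.2]{K3}, and your outline is essentially the standard argument from that source: you reduce everything formally to the decomposition $e_i^{r}\cong (e_i^{(r)})^{\oplus r!}$, which you (correctly) import from the same citation rather than prove. So your proposal is sound and follows essentially the same route as the paper, namely relying on Kleshchev's results for the non-formal step.
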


\begin{lemma}
\label{lem:shap}
Let $k\in\Z_{\geq 0}$. For $V\in\mod{\F\s_n}$ and $W\in\mod{\F\s_{n-r}}$, we have 
\begin{align*}
\Ext^k_{\s_{n-r}}(e_i^{(r)}V, W)\cong\Ext^k_{\s_{n}}(V,f_i^{(r)}W) 
\quad {\textrm {and}}\quad 
\Ext^k_{\s_{n}}(f_i^{(r)}W, V)\cong\Ext^k_{\s_{n-r}}(W,e_i^{(r)}V).
\end{align*}
\end{lemma}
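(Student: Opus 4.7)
The plan is to argue by contradiction. Suppose $\la$ is minimal with $\Ext^1(D^\la, D^\la) \neq 0$ but has normal nodes of two distinct residues $i \neq j$; fix a non-split self-extension $V \cong D^\la | D^\la$, and for $k \in \{i, j\}$ set $r_k := \eps_k(\la) \geq 1$ and $\mu_k := \te_k^{r_k}\la$, which is $p$-regular with $|\mu_k| < n$, so by minimality of $\la$ we have $\Ext^1(D^{\mu_k}, D^{\mu_k}) = 0$.

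The first step is to verify that $e_k^{(r_k)} V \cong (D^{\mu_k})^{\oplus 2}$. By Kleshchev's branching rules, the head of $e_k^{(r_k)} D^\la$ is $D^{\mu_k}$ and $[e_k^{(r_k)} D^\la : D^{\mu_k}] = 1$; self-duality of $e_k^{(r_k)}$ (\cref{lem:div.func}) forces the socle to equal $D^{\mu_k}$ as well, and hence $e_k^{(r_k)} D^\la \cong D^{\mu_k}$. Exactness of $e_k^{(r_k)}$ applied to $V$ then yields an extension of $D^{\mu_k}$ by itself which splits because $\Ext^1(D^{\mu_k}, D^{\mu_k}) = 0$. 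The second step is to produce an embedding $V \hookrightarrow f_k^{(r_k)} D^{\mu_k}$: the unit $\eta : D^\la \hookrightarrow f_k^{(r_k)} D^{\mu_k}$ (adjoint to $1_{D^{\mu_k}}$) is injective; applying $\Hom(D^\la, -)$ to $0 \to D^\la \to f_k^{(r_k)} D^{\mu_k} \to Q_k \to 0$ and using $\Ext^1(D^\la, f_k^{(r_k)} D^{\mu_k}) \cong \Ext^1(D^{\mu_k}, D^{\mu_k}) = 0$ from \cref{lem:shap} gives $\Hom(D^\la, Q_k) \cong \Ext^1(D^\la, D^\la) \neq 0$; any nonzero element lifts to a copy of $V$ inside $f_k^{(r_k)} D^{\mu_k}$.

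The third step draws the contradiction. Apply the exact $e_j^{(s)}$ (with $s := r_j$) to the embedding $V \hookrightarrow f_i^{(r_i)} D^{\mu_i}$. Since $e_j^{(s)}$ and $f_i^{(r_i)}$ commute on our blocks (as $i \neq j$ the Mackey correction vanishes), this produces $(D^{\mu_j})^{\oplus 2} \cong e_j^{(s)} V \hookrightarrow f_i^{(r_i)} e_j^{(s)} D^{\mu_i}$, so $\dim \Hom(D^{\mu_j}, f_i^{(r_i)} e_j^{(s)} D^{\mu_i}) \geq 2$; by adjunction, $\dim \Hom(e_i^{(r_i)} D^{\mu_j}, e_j^{(s)} D^{\mu_i}) \geq 2$. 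When $|i - j| \geq 2 \pmod p$, removing $j$-nodes does not perturb the $i$-signature (and vice versa), so $\eps_i(\mu_j) = r_i$ and $\eps_j(\mu_i) = s$; both arguments reduce to the same simple module $D^{\te_i^{r_i}\mu_j} = D^{\te_j^{s}\mu_i}$, the Hom is one-dimensional, contradiction.

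The main obstacle is the case $|i - j| \equiv \pm 1 \pmod p$ (the only case when $p = 3$): here removing $i$-normal nodes can perturb the $j$-signature, so $e_j^{(s)} D^{\mu_i}$ may fail to be irreducible or even vanish, and the direct Hom comparison breaks down. I would resolve this by appealing to \cref{thmc}: applied to the embeddings from the second step with $i$ and $j$ in turn, it yields $p$-singularity constraints on modifications of $\la$ using only $i$-residue normal/conormal nodes (respectively only $j$-residue ones). A combinatorial analysis on the abacus display of $\la$ should then show that these simultaneous constraints cannot both be satisfied by a $p$-regular partition that has normal nodes of both residues, yielding the required contradiction.
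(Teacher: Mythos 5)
Your proposal does not prove the statement at hand. The statement is \cref{lem:shap}: a purely homological adjunction isomorphism $\Ext^k_{\s_{n-r}}(e_i^{(r)}V,W)\cong\Ext^k_{\s_n}(V,f_i^{(r)}W)$ (and its mirror) for \emph{arbitrary} modules $V\in\mod{\F\s_n}$, $W\in\mod{\F\s_{n-r}}$ and all $k\geq 0$. What you have written is instead an attempted proof of \cref{thmb} (equivalently \cref{T081223}), the result about minimal partitions with non-trivial self-extensions having all normal nodes of the same residue: your argument is organised around minimality of $\la$, a fixed self-extension $V\cong D^\la|D^\la$, branching rules and abacus combinatorics, and it never mentions an arbitrary $W$, an arbitrary $k$, or the adjunction between the divided power functors. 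Indeed, in your second step you \emph{cite} \cref{lem:shap} to get $\Ext^1(D^\la,f_k^{(r_k)}D^{\mu_k})\cong\Ext^1(D^{\mu_k},D^{\mu_k})$, so if this text were offered as a proof of \cref{lem:shap} it would be circular.

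The intended argument for \cref{lem:shap} is short and quite different in nature: by \cref{lem:div.func} the functors $e_i^{(r)}$ and $f_i^{(r)}$ are exact and biadjoint, so the Hom-adjunction isomorphisms extend from $k=0$ to all $\Ext^k$ (an exact functor with an exact right, resp.\ left, adjoint preserves projectives, resp.\ injectives, and one then compares the two resolutions; this is the form of Shapiro's lemma invoked in the paper, these functors being cut from induction and restriction by block idempotents). If you want to salvage your text, it belongs with \cref{T081223}; even there, note that the paper's actual proof avoids your case analysis on $|i-j|$ entirely by combining the embedding $V\subseteq f_i^{(\eps_i(\la))}D^{\te_i^{\eps_i(\la)}\la}$ with the dual statement that $V$ is a quotient of $f_j^{(\eps_j(\la))}D^{\te_j^{\eps_j(\la)}\la}$, and then contradicting the Hom bound of \cref{L081223}, so no adjacent-residue combinatorics is needed.
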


\begin{proof}
This follows from Lemma~\ref{lem:div.func} and Shapiro's lemma.
\end{proof}

\begin{lemma}
\label{lem:res.irre}
 \cite[Theorems 11.2.10, 11.2.11]{K3}
Let $\la\in \Parreg_n$, $i\in I$ and $r\in\Z_{\geq 1}$. Then: 
\begin{itemize}
\item[(i)] $e_i^{(r)}D^\la\neq 0$ (resp. $f_i^{(r)}D^\la\neq 0$) if and only if $r\leq \eps_i(\la)$ (resp. $r\leq \phi_i(\la)$), in which case $e_i^{(r)}D^\la$ (resp. $f_i^{(r)}D^\la$) is a self-dual indecomposable module with simple socle and head both isomorphic to $D^{ \te_i^r\la}$ (resp. $D^{ \tf_i^r\la})$;
\item[(ii)] $[e_i^{(r)}D^\la:D^{ \te_i^r\la}]={\eps_i(\la)\choose r} 
=\dim\End_{\s_{n-r}}(e_i^{(r)}D^\la)$;
and $[f_i^{(r)}D^\la:D^{ \tf_i^r\la}]={\phi_i(\la)\choose r}=\dim\End_{\s_{n+r}}(f_i^{(r)}D^\la)$;
\item[(iii)] If $D^\mu$ is a composition factor of $e_i^{(r)}D^\la$ (resp. $f_i^{(r)}D^\la$), then $\eps_i(\mu)\leq\eps_i(\la)-r$ (resp. $\phi_i(\mu)\leq\phi_i(\la)-r$), with equality holding if and only if $\mu= \te_i^r\la$ (resp. $\mu= \tf_i^r\la$). In particular, $e_i^{(\eps_i(\la))}D^\la\cong D^{\te_i^{\eps_i(\la)}}$ and 
$f_i^{(\phi_i(\la))}D^\la\cong D^{\tf_i^{\phi_i(\la)}}$. 
\item[(iv)] $e_i^{(r)}D^\la$ (resp $f_i^{(r)}D^\la$) is irreducible if and only if $r=\eps_i(\la)$ (resp. $r=\phi_i(\la)$).
\end{itemize}
\end{lemma}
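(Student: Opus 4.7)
The plan is to derive all four parts from the $r=1$ branching rule combined with the divided-power decomposition $e_i^r\cong(e_i^{(r)})^{\oplus r!}$ of Lemma~\ref{lem:div.func}, working inductively on $r$. The $r=1$ case is Kleshchev's modular branching rule: $e_iD^\la$ has simple socle and head isomorphic to $D^{\tilde e_i\la}$ when $\eps_i(\la)>0$ and vanishes otherwise, while any other composition factor $D^\nu$ satisfies $\eps_i(\nu)\leq\eps_i(\la)-2$. The key tools are Frobenius reciprocity via Lemma~\ref{lem:shap} and self-duality via Lemma~\ref{lem:div.func}: from the isomorphism $\Hom(D^\mu,e_iD^\la)\cong\Hom(f_iD^\mu,D^\la)$ one pins down the socle by analysing the head of $f_iD^\mu$, and self-duality of $e_iD^\la$ (which is biadjoint and commutes with duality) transfers socle information to head information.

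Next I would prove (iii) by induction on $r$: any composition factor $D^\mu$ of $e_i^{(r)}D^\la$ appears in $e_i\cdot e_i^{(r-1)}D^\la$, so the $r=1$ bound and the inductive hypothesis force $\eps_i(\mu)\leq\eps_i(\la)-r$, with equality only for $\mu=\tilde e_i^r\la$. Part (i) follows: non-vanishing of $e_i^{(r)}D^\la$ is equivalent to $r\leq\eps_i(\la)$ by (iii) combined with $e_i^r\cong(e_i^{(r)})^{\oplus r!}$; self-duality comes from Lemma~\ref{lem:div.func}; simple head and socle both $\cong D^{\tilde e_i^r\la}$ follow because the adjunction $\Hom(D^\mu,e_i^{(r)}D^\la)\cong\Hom(f_i^{(r)}D^\mu,D^\la)$ together with (iii) applied to $f_i^{(r)}$ forces $\mu=\tilde e_i^r\la$; indecomposability is then automatic from self-duality with simple socle.

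For (ii), I would compute $\dim\End(e_i^{(r)}D^\la)=\dim\Hom(D^\la,f_i^{(r)}e_i^{(r)}D^\la)$ via Lemma~\ref{lem:shap}, then expand $f_i^{(r)}e_i^{(r)}$ using the categorified Kac-Moody commutation relation (a Chuang--Rouquier strong $\mathfrak{sl}_2$-categorification on $\bigoplus_n\mod{\F\s_n}$) into a sum of $e_i^{(r-k)}f_i^{(r-k)}$ terms weighted by binomial coefficients in $\eps_i(\la)-\phi_i(\la)$, yielding multiplicity $\binom{\eps_i(\la)}{r}$; the composition-factor multiplicity $[e_i^{(r)}D^\la:D^{\tilde e_i^r\la}]$ equals this endomorphism dimension because the module is self-dual with simple head. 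Finally (iv) is immediate from (ii) and (iii): when $r=\eps_i(\la)$ the only allowed composition factor is $D^{\tilde e_i^r\la}$, occurring with multiplicity $\binom{\eps_i(\la)}{\eps_i(\la)}=1$. The main obstacle is the Kac-Moody commutation relation underlying (ii), which ultimately rests on the strong categorification of $\mathfrak{sl}_2$; bypassing this requires a direct, more delicate analysis of the socle filtration of $f_i^{(r)}e_i^{(r)}D^\la$ and an explicit identification of its endomorphism ring with the ring of symmetric functions in $\eps_i(\la)$ variables.
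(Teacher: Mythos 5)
The paper does not prove this lemma at all: it is quoted as a black box from \cite[Theorems 11.2.10, 11.2.11]{K3}. Your outline is essentially the strategy of that cited proof --- bootstrap from the $r=1$ modular branching rule via $e_i^r\cong(e_i^{(r)})^{\oplus r!}$, adjunction, and self-duality --- so you are re-deriving the invoked result rather than taking a different route. Parts (iii), (i) (non-vanishing) and (iv) do follow cleanly from your inductive scheme once (ii) is known. But two steps do not close as written. First, the claim that $[e_i^{(r)}D^\la:D^{\te_i^r\la}]=\dim\End_{\s_{n-r}}(e_i^{(r)}D^\la)$ ``because the module is self-dual with simple head'' only yields one inequality: for any $M$ with simple head $D$, precomposing with the surjection $P_D\to M$ embeds $\End(M)$ into $\Hom(P_D,M)$, so $\dim\End(M)\leq[M:D]$, and self-duality gives nothing in the other direction. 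The equality, and the evaluation of both sides as $\binom{\eps_i(\la)}{r}$, genuinely require the categorified $\mathfrak{sl}_2$-relations (in \cite{K3}, the degenerate affine Hecke algebra action and the analysis of $f_i^{(r)}e_i^{(r)}$) --- precisely the ``main obstacle'' you defer to the last sentence; without it, the direction ``irreducible $\Rightarrow r=\eps_i(\la)$'' of (iv) is also unavailable. Second, your derivation of the simple socle from $\Hom(D^\mu,e_i^{(r)}D^\la)\cong\Hom(f_i^{(r)}D^\mu,D^\la)$ is circular at face value: to see that the left-hand side is one-dimensional for $\mu=\te_i^r\la$ you need $D^\la$ to occur exactly once in the head of $f_i^{(r)}D^{\te_i^r\la}$, which is the $f$-half of the very statement being proved. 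This is repaired in the source by a simultaneous induction on the $e$- and $f$-statements using $\tf_i^r\te_i^r\la=\la$ and $\phi_i(\te_i^r\la)=\phi_i(\la)+r$, but that needs to be said. In short: as a roadmap your proposal is faithful to \cite{K3}, but as a proof it still rests on the two unproved inputs you yourself name --- the $r=1$ branching theorem and the strong categorification --- which together constitute essentially the entire content of the cited chapters.
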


\begin{lemma}\cite[Lemma 2.24]{GKM}
\label{lem:res.Sp}
Let $\la\in \Par_n$, $i\in I$ and $r\in\Z_{\geq 0}$. 
Then  $e_i^{(r)}S^\la$ (resp. $f_i^{(r)}S^\la$) has a Specht filtration with factors $\{S^{\la_\bA}\mid\bA\in \Om^r(\Rem(\la,i))\}$ (resp. $\{S^{\la^\bB}\mid\bB\in \Om^r(\Add(\la,i))\}$), each appearing once.
\end{lemma}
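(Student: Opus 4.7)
The plan is induction on $r$, grounded in the classical $r=1$ Specht branching rule.

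For $r=1$, invoke the Specht branching rule (cf.\ \cite[\S17]{JamesBook}): $S^\la\downarrow_{\s_{n-1}}$ has a Specht filtration with factors $\{S^{\la_A}\}$ indexed by the removable nodes $A$ of $\la$, each appearing exactly once. Since $S^{\la_A}\in B_{\cont(\la)-\de_{\res A}}$ and $e_i$ is the block projection $b_{\cont(\la)-\de_i}\circ(-\downarrow_{\s_{n-1}})$, projecting the filtration yields a Specht filtration of $e_i S^\la$ with factors $\{S^{\la_A}\mid A\in\Rem(\la,i)\}$. The $f_i S^\la$ statement follows dually from the induction branching rule.

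For general $r$, the cleanest route uses the realization $e_i^{(r)}V=b_{\cont(\la)-r\de_i}(V\downarrow^{\s_n}_{\s_{n-r}\times\s_r})^{\s_r}$. The generalized Young branching rule produces a Specht filtration of $S^\la\downarrow^{\s_n}_{\s_{n-r}\times\s_r}$ with factors $S^\mu\boxtimes S^\nu$ of multiplicity $c^\la_{\mu\nu}$ (Littlewood--Richardson). Taking $\s_r$-invariants isolates the trivial $\s_r$-isotypic piece ($\nu=(r)$), and Pieri's rule identifies the surviving $\mu$ as those with $\la/\mu$ a horizontal $r$-strip; the block projection $b_{\cont(\la)-r\de_i}$ then restricts further to strips of residue $i$. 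A short combinatorial check shows that no two removable nodes of a partition share a column: if $(k,c)$ and $(k',c)$ with $k<k'$ were both removable, then $\la_k=\la_{k'}=c$ would force $\la_{k+1}=c$, contradicting removability of $(k,c)$. Hence every $r$-subset of $i$-removable nodes of $\la$ automatically forms a horizontal strip, matching the factors with $\{S^{\la_\bA}\mid\bA\in\Om^r(\Rem(\la,i))\}$, each appearing once. The $f_i^{(r)}$ case is parallel, using the dual induction branching rule and the same column-uniqueness for $i$-addable nodes.

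The principal obstacle is establishing the generalized branching rule as a Specht \emph{filtration} (rather than a direct-sum decomposition) in characteristic $p$, and verifying compatibility with the left-exact $\s_r$-invariants functor. This is handled by working with Specht modules over $\Z$, where the $S^\mu\boxtimes S^\nu$ filtration holds with the expected multiplicities, and reducing modulo $p$; the isotypic-component computation then follows the characteristic-zero lift, using $(S^{(r)})^{\s_r}\cong\F$ while the remaining $S^\nu$ contribute $0$ in the relevant block $B_{\cont(\la)-r\de_i}$.
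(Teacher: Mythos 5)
First, note that the paper itself offers no proof of this lemma: it is quoted verbatim from \cite[Lemma 2.24]{GKM}, so there is no internal argument to compare yours against. Your $r=1$ step is fine (branching rule plus block projection), and so is the combinatorial observation that the partitions $\mu\subseteq\la$ with $|\la/\mu|=r$ and $\cont(\mu)=\cont(\la)-r\de_i$ are exactly the $\la_\bA$ with $\bA\in\Om^r(\Rem(\la,i))$.

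The general-$r$ step, however, has a genuine gap. The claim that taking $\s_r$-fixed points ``isolates the trivial isotypic piece $\nu=(r)$'' is a characteristic-zero fact; over $\F$ with $p\le r$ one can have $(S^\nu)^{\s_r}\neq 0$ for $\nu\neq(r)$, namely whenever $\1$ occurs in the socle of $S^\nu$ (e.g.\ $\nu=(r-1,1)$ with $p\mid r$). Your proposed rescue --- that these terms vanish ``in the relevant block'' --- cannot work: the idempotent $b_{\cont(\la)-r\de_i}$ lies in $\F\s_{n-r}$ and only constrains the first tensor factor, and the offending factors $S^\mu\boxtimes S^{(r-1,1)}$ carry exactly the same $\mu$ as the wanted factors $S^\mu\boxtimes S^{(r)}$. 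Concretely, take $p=3$, $\la=(5,3,1)$, $r=3$, $i=1$: all three removable nodes have residue $1$, so the lemma predicts the single factor $S^{(4,2)}$; but $s_{\la/\mu}=s_{(1)}^3$ gives $c^\la_{\mu,(3)}=1$, $c^\la_{\mu,(2,1)}=2$, and $(S^{(2,1)})^{\s_3}\cong\F$ in characteristic $3$ (the all-ones vector lies in the sum-zero subspace), so your count yields multiplicity $3$, not $1$. This shows the scheme, not just its justification, breaks down. Relatedly, the left-exactness problem you flag is not actually resolved by passing to $\Z$-forms: fixed points do not commute with reduction mod $p$, and applying a left-exact functor to a filtration does not produce a filtration by the fixed points of the factors. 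A workable elementary route is to iterate your $r=1$ step to get a Specht filtration of $e_i^rS^\la$ with each $S^{\la_\bA}$ appearing exactly $r!$ times (removing an $i$-node creates no new $i$-removable nodes), and then invoke $e_i^r\cong(e_i^{(r)})^{\oplus r!}$; but the substantive point --- that the direct summand $e_i^{(r)}S^\la$ itself admits a Specht filtration, with the multiplicities then forced by ordinary characters --- is precisely what requires the additional input supplied in \cite{GKM} and is missing here.
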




\section{$p$-separated partitions}\label{separated}

In this section we will prove \cref{thma}. Before proving it, we need the following lemma.

\begin{lemma}\label{L131124}
Let $\la$ be $p$-separated. Fix an abacus configuration $\Ga$ of $\la$. Assume that for some $1\leq j\leq p-1$ we have $r_{j-1}(\Ga)>r_j(\Ga)$ and let $\mu:=\hf_{i_j(\Ga)}^{\phi'_{i_j(\Ga)}(\la)}$. Then:
\begin{enumerate}
\item $\mu$ is $p$-separated,

\item there exists a composition factors of $S^\la$ which has self-extensions if and only if there is a composition factor of $S^\mu$ which has self-extensions.
\end{enumerate}

The same result holds for $j=0$ if $r_{p-1}(\Ga)\geq r_0(\Ga)$.
\end{lemma}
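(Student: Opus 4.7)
My plan has two independent parts. Part (1) is combinatorial on the abacus; part (2) uses the divided-power translation functor $f_i^{(\bphi_i(\la))}$ with $i := i_j(\Ga)$ to transfer composition factor information between $S^\la$ and $S^\mu$.

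For part (1), I would describe the abacus configuration $\Ga'$ of $\mu$ explicitly. The operation $\hf_{i_j(\Ga)}^{\bphi_{i_j(\Ga)}(\la)}$ fills every $i_j(\Ga)$-addable node of $\la$; on the abacus this slides every bead on runner $j-1$ whose right neighbour on runner $j$ is vacant into that vacancy (and for $j=0$, moves beads from runner $p-1$ one row down onto runner $0$). Writing $k$ for the number of such moves, the new bead counts are $r'_{j-1} = r_{j-1} - k$ and $r'_j = r_j + k$, with all other $r'_l = r_l$ unchanged; the top-bead indices $m_l$ are altered only on the affected runners, in a controlled way. I would then verify the defining inequality of $p$-separatedness for $\mu$ runner-pair by runner-pair. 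Pairs not involving runners $j-1$ or $j$ are unchanged and inherit their inequality from $\la$; the remaining pairs are controlled by the strict inequality $r_{j-1} > r_j$ (respectively $r_{p-1} \geq r_0$ when $j=0$) combined with the inequalities already holding for $\la$, which together supply the slack needed after the bead shift.

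For part (2), the main input is \cref{lem:res.Sp}: taking $r := \bphi_i(\la)$ equal to the total number of $i$-addable nodes of $\la$ makes $\Om^r(\Add(\la,i))$ a singleton, so the Specht filtration of $f_i^{(r)} S^\la$ collapses to the single factor
\[ f_i^{(r)} S^\la \cong S^\mu. \]
By exactness of $f_i^{(r)}$ (\cref{lem:div.func}) together with \cref{lem:res.irre}(iii)--(iv), $D^\nu$ is sent to the irreducible $D^{\tf_i^r\nu}$ whenever $\phi_i(\nu) = r$ and to zero when $\phi_i(\nu) < r$. Using $p$-separatedness and $r_{j-1} > r_j$, I would establish the uniform condition $\phi_i(\nu) = r$ for every composition factor $D^\nu$ of $S^\la$, so that $\nu \mapsto \tf_i^r \nu$ becomes a bijection from composition factors of $S^\la$ onto those of $S^\mu$ (with the reverse direction proved symmetrically via $e_i^{(r)}$ and part (1)). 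The adjunction \cref{lem:shap}, combined with the identity $e_i^{(r)} D^{\tf_i^r\nu} \cong D^\nu$ which follows from \cref{lem:res.irre}(iii) once the relevant $\eps_i$-values are forced to equal $r$, would then yield
\[ \Ext^1(D^\nu, D^\nu) \cong \Ext^1(D^{\tf_i^r\nu}, D^{\tf_i^r\nu}), \]
finishing the proof.

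The main obstacle is proving the uniform structural property: that every composition factor $D^\nu$ of $S^\la$ satisfies $\phi_i(\nu) = \bphi_i(\la)$, together with the symmetric statement that every composition factor $D^{\nu'}$ of $S^\mu$ satisfies $\eps_i(\nu') = \beps_i(\mu)$. It is here that $p$-separatedness and the inequality $r_{j-1} > r_j$ enter essentially; without them, composition factors with smaller $\phi_i$ or $\eps_i$ could appear and both the bijection and the Ext transfer would break down. I expect the argument to go through a careful analysis of how the dominance condition $\nu \unrhd \la$ constrains the $i$-signature of each such $\nu$ once the abacus rigidity of a $p$-separated configuration is imposed, ruling out any cancelled $+\,-$ pair in the $i$-signature.
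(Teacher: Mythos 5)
Your overall architecture is the same as the paper's: part (i) by a runner-by-runner check on the abacus, part (ii) by using \cref{lem:res.Sp} to get $f_{i_j}^{(\phi'_{i_j}(\la))}S^\la\cong S^\mu$, showing that every composition factor $D^\nu$ of $S^\la$ satisfies $\phi_{i_j}(\nu)=\phi'_{i_j}(\la)$, and then transferring self-extensions through the equivalences $f_{i_j}^{(\phi_{i_j}(\nu))}D^\nu\cong D^{\tf_{i_j}^{\phi_{i_j}(\nu)}\nu}$ and Shapiro's lemma. However, the step you yourself flag as ``the main obstacle'' is left unproved, and the route you propose for it (analysing how $\nu\unrhd\la$ constrains the $i$-signature of $\nu$) is not the mechanism that works: dominance of $\nu$ over $\la$ gives you no direct control on the \emph{reduced} $i$-signature of $\nu$, and what you actually need is $\eps_{i_j}(\nu)=0$, not merely the absence of cancelling pairs. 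This is a genuine gap.

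The missing observation is that $p$-separatedness together with $r_{j-1}(\Ga)>r_j(\Ga)$ forces $h(\la^{(j-1)})+\la^{(j)}_1\leq r_{j-1}(\Ga)-r_j(\Ga)$, which on the abacus says that every bead on runner $j$ has a bead directly to its left on runner $j-1$; hence $\la$ has \emph{no} $i_j$-removable nodes at all, i.e.\ $\eps'_{i_j}(\la)=0$. From this everything follows module-theoretically rather than combinatorially: $e_{i_j}S^\la=0$ by \cref{lem:res.Sp}, so $e_{i_j}D^\nu=0$ and therefore $\eps_{i_j}(\nu)=0$ by \cref{lem:res.irre} for every composition factor $D^\nu$ of $S^\la$; since $\phi_{i_j}-\eps_{i_j}$ is constant on a block (it equals $\phi'_{i_j}-\eps'_{i_j}$, which depends only on the core), this yields $\phi_{i_j}(\nu)=\phi'_{i_j}(\la)$, and symmetrically $\phi_{i_j}(\pi)=0$ for composition factors $D^\pi$ of $S^\mu$. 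The same observation also streamlines your part (i): because runner $j-1$ contains runner $j$, filling all $i_j$-addable positions is precisely the swap of runners $j-1$ and $j$ (your $k$ equals $r_{j-1}-r_j$, so $r'_{j-1}=r_j$ and $r'_j=r_{j-1}$), after which the pairwise verification of $p$-separatedness for $\mu$ is immediate rather than requiring the unspecified ``controlled'' tracking of the $m_l$. Without identifying $\eps'_{i_j}(\la)=0$, neither half of your argument closes.
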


\begin{proof}
The statement for $j=0$ follows from the statement for $j=0$ by taking the abacus configuration of $\la$ with one more bead. So we may assume that $1\leq j\leq p-1$ and $r_{j-1}(\Ga)>r_j(\Ga)$.

Let $i_j:=i_j(\Ga)$.

Since $\la$ is $p$-separated, we have that $h(\la^{(j-1)})+\la^{(j)}_1\leq r_{j-1}(\Ga)-r_{j}(\Ga)$. This means that the last bead on runner $j$ is on a higher row than the first missing bead on runner $j-1$. So runner $j-1$ contains runner $j$ and then $\eps'_{i_j}(\la)=0$. 

Let $\Ga'$ be obtained by exchanging runners $j-1$ and $j$ of $\Ga$. Then $\Ga'$ is an abacus configuration for $\mu$.  By the previous paragraph, runners $j-1$ and $j$ of $\Ga'$ satisfy the $p$-separated condition. For $k\not=\ell$ with neither of $k$ or $\ell$ equal to $j-1$ or $j$, runners $k$ and $\ell$ satisfy the $p$-separated condition for $\Ga'$ since they satisfy it for $\Ga$ and runners $k$ and $\ell$ are unchanged. If $k\not=j-1,j$, then runners $k$ and $j-1$ (resp. $k$ and $j$) satisfy the $p$-separated condition for $\Ga'$ since runners $k$ and $j$ (resp. $k$ and $j-1$) satisfy the $p$-separated condition for $\Ga$. So (i) holds.

Let $D^\nu$ be a composition factor of $S^\la$. As $\la$ has no removable $i_j$-nodes, $e_{i_j}S^\la=0$ by \cref{lem:res.Sp}. In particular $e_{i_j}D^\nu=0$ and so $\eps_{i_j}(\nu)=0$ by \cref{lem:res.irre}.

Let $\Ga(\nu)$ be chosen based on the choice of $\Ga$. Then, by the definition of normal and conormal nodes,
\[\phi_{i_j}(\nu)-\eps_{i_j}(\nu)=\phi'_{i_j}(\nu)-\eps'_{i_j}(\nu)=r_{j}(\Ga(\nu))-r_{j-1}(\Ga(\nu)).\]
Since $\nu$ and $\la$ have the same core we then have that
\[\phi_{i_j}(\nu)-\eps_{i_j}(\nu)=r_{j}(\Ga)-r_{j-1}(\Ga)=\phi'_{i_j}(\la)-\eps'_{i_j}(\la).\]
It then follows that
$\phi_{i_j}(\nu)=\phi'_{i_j}(\la)$.

By \cref{lem:res.irre,lem:res.Sp}, $f_{i_j}^{(\phi'_{i_j}(\la))}S^\la\cong S^\mu$ and $f_{i_j}^{(\phi_{i_j}(\nu))}D^\nu\cong D^{\tf_{i_j}^{\phi_{i_j}(\nu)}\nu}$. Since $\phi_{i_j}(\nu)=\phi'_{i_j}(\la)$ it follows that $D^{\tf_{i_j}^{\phi_{i_j}(\nu)}\nu}$ is a composition factor of $S^\mu$.

Similarly if $D^\pi$ is a composition factor of $S^\mu$ then $\phi_{i_j}(\pi)=0$ and $D^{\te_{i_j}^{\eps_{i_j}(\pi)}\pi}$ is a composition factor of $S^\la$.

So (ii) follows by \cite[Lemma 2.22]{GKM}.
\end{proof}

We are now ready to prove \cref{thma}.

\begin{proof}[Proof of \cref{thma}]
Let $\la$ be $p$-separated. Let the sequence of partitions $\la=\la^0,\ldots,\la^s$ be obtained applying the sequence of moves from the proof of \cite[Lemma 3.1]{F1}. Then $\la^k$ is obtained from $\la^{k-1}$ by swapping two consecutive runners in the corresponding abacus configurations, so that the first of the two runners contains more beads than the other one for $\la^{k-1}$. By repeated application of \cref{L131124}(i), each $\la^k$ is $p$-separated. Further by construction $S^{\la^s}$ is in a RoCK block.

By \cite[Theorem A]{GKM} no composition factor of $S^{\la^s}$ has self-extensions, so the same holds for $S^\la$ by again repeatedly applying \cref{L131124}(ii).
\end{proof}

\section{Reduction results}\label{sreductions}

In this section we will present improvements on reduction results from \cite{GKM}, proving Theorems \ref{thmc} and \ref{thmb}.

We start by looking at Theorem \ref{thmb}. Before proving it, we need the next lemma of independent interest.

\begin{lemma}\label{L081223}
Let $n,a,b\geq 0$ and $i\not=j\in I$. If $\la\in\Parres_{n-a}$ and $\mu\in\Parres_{n-b}$ then
\[\dim\Hom_{\s_n}(f_i^{(a)}D^\la,f_j^{(b)}D^\mu)\leq 1.\]

Similarly if $\la\in\Parres_{n+a}$ and $\mu\in\Parres_{n+b}$ then
\[\dim\Hom_{\s_n}(e_i^{(a)}D^\la,e_j^{(b)}D^\mu)\leq 1.\]
\end{lemma}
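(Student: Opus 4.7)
The plan is to reduce the Hom computation, via the biadjunction of \cref{lem:shap} and the standard commutation $e_i^{(a)}f_j^{(b)}\cong f_j^{(b)}e_i^{(a)}$ for $i\neq j$, to a socle computation for a module to which \cref{lem:res.irre} directly applies. The $e$-version is entirely symmetric (swap the roles of the two adjunctions), so I will only discuss the $f$-version.

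Applying \cref{lem:shap} to the second argument gives
\[
\Hom_{\s_n}(f_i^{(a)}D^\la,f_j^{(b)}D^\mu)\cong \Hom_{\s_{n-a}}(D^\la,e_i^{(a)}f_j^{(b)}D^\mu).
\]
Since $i\neq j$, the Mackey decomposition $V\ua\da\cong V\oplus V\da\ua$ together with the block decomposition of restriction/induction yields the natural isomorphism $e_i^{(a)}f_j^{(b)}\cong f_j^{(b)}e_i^{(a)}$ as endofunctors on $\F\s_n\mods$ (the diagonal Mackey term lives in a different block and drops out). Thus
\[
\Hom_{\s_n}(f_i^{(a)}D^\la,f_j^{(b)}D^\mu)\cong \Hom_{\s_{n-a}}(D^\la,f_j^{(b)}e_i^{(a)}D^\mu).
\]

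Since $D^\la$ is simple, the right-hand side equals $[\soc(f_j^{(b)}e_i^{(a)}D^\mu):D^\la]$. The core claim I would aim for is that this socle is simple, equal to $D^{\tf_j^b\te_i^a\mu}$ (noting that $\tf_j^b$ and $\te_i^a$ commute on $\mu$ for $i\neq j$). By \cref{lem:res.irre}(i), $e_i^{(a)}D^\mu$ has simple socle $D^{\te_i^a\mu}$, and applying the exact functor $f_j^{(b)}$ to the inclusion $D^{\te_i^a\mu}\hookrightarrow e_i^{(a)}D^\mu$ embeds $f_j^{(b)}D^{\te_i^a\mu}$ (whose socle is the simple module $D^{\tf_j^b\te_i^a\mu}$) into $f_j^{(b)}e_i^{(a)}D^\mu$; this shows $D^{\tf_j^b\te_i^a\mu}$ occurs in the socle. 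The hard step is to rule out further simple submodules.

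For this I would argue dually: by self-duality of the functors (\cref{lem:div.func}) and of $D^\mu$, the module $f_j^{(b)}e_i^{(a)}D^\mu$ is self-dual, so it is equivalent to show its head is simple. Any simple quotient $D^\pi$ gives, after the same chain of adjunctions and commutation run backwards, a nonzero map $D^\mu\to e_j^{(b)}f_i^{(a)}D^\pi\cong f_i^{(a)}e_j^{(b)}D^\pi$, which by \cref{lem:res.irre}(i) forces $\pi=\tf_j^b\te_i^a\mu$ and contributes a one-dimensional space. I expect to carry this out by induction on $a+b$: the base cases $a=0$ or $b=0$ are immediate from \cref{lem:res.irre}(i) (the Hom is between a module with simple head and one with simple socle that is itself simple), and the inductive step exploits the short exact sequence $0\to D^{\te_i^a\mu}\to e_i^{(a)}D^\mu\to Q\to 0$ where the composition factors of $Q$ have strictly smaller $\eps_i$, combined with \cref{lem:res.irre}(iii) to control the new Hom spaces. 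The main obstacle is precisely this uniqueness argument at the head/socle level --- closing the induction cleanly without inadvertently using the very statement being proved will require careful bookkeeping of the $\eps_i$- and $\phi_j$-invariants of the composition factors appearing in the intermediate modules.
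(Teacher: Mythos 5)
Your opening reduction is sound and coincides with the first step of the paper's argument: adjunction plus the commutation $e_i^{(a)}f_j^{(b)}\cong f_j^{(b)}e_i^{(a)}$ for $i\neq j$ (the paper quotes \cite[Lemma 4.7]{M1} for this), and the degenerate cases $a=0$ or $b=0$ are disposed of exactly as you do, via the simple socle/head statements in \cref{lem:res.irre}. The genuine gap is at the step you yourself flag as "the hard step". Two problems. First, you aim at a strictly stronger claim than needed: the lemma only requires that each simple module occur at most once in $\soc\bigl(f_j^{(b)}e_i^{(a)}D^\mu\bigr)$ (multiplicity-freeness), not that this socle be simple; the paper neither proves nor uses simplicity of the socle of such a composite, and it is not clear that it holds. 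Second, your proposed uniqueness argument is circular: from a simple quotient $D^\pi$ you produce a nonzero map $D^\mu\to f_i^{(a)}e_j^{(b)}D^\pi$ and then invoke \cref{lem:res.irre}(i) to force $\pi=\tf_j^b\te_i^a\mu$, but that lemma controls the socle of a single $e_i^{(r)}$ or $f_i^{(r)}$ applied to a simple, not of the composite $f_i^{(a)}e_j^{(b)}$ --- and controlling the socle of that composite is precisely the statement being proved, with the two sides exchanged. Your fallback induction on $a+b$ also has no working inductive step: the sequence $0\to D^{\te_i^a\mu}\to e_i^{(a)}D^\mu\to Q\to 0$ does not decrease $a+b$, and bounding $\Hom(D^\la,f_j^{(b)}Q)$ requires information of exactly the kind you are trying to establish.

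The paper closes the argument by a different descent, and you will need a substitute for its key input if you want to complete your route. For $0<a\le\phi_i(\la)$ and $0<b\le\phi_j(\mu)$ one has
\[
\dim\Hom_{\s_n}(f_i^{(a)}D^\la,f_j^{(b)}D^\mu)=\dim\Hom_{\s_{n-a-b}}(e_j^{(b)}D^\la,e_i^{(a)}D^\mu),
\]
and then \cite[Lemma 3.3]{M1} (a comparison of $e_j^{(b)}D^\la$ with $f_j^{(\eps_j(\la)-b)}D^{\te_j^{\eps_j(\la)}\la}$, and likewise for $\mu$), together with self-duality of the functors, majorizes the right-hand side by
\[
\dim\Hom_{\s_{n-a-b}}\bigl(f_i^{(\eps_i(\mu)-a)}D^{\te_i^{\eps_i(\mu)}\mu},\,f_j^{(\eps_j(\la)-b)}D^{\te_j^{\eps_j(\la)}\la}\bigr),
\]
a Hom space of the original shape over the strictly smaller group $\s_{n-a-b}$. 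Iterating must terminate in one of the degenerate cases, which gives the bound $1$; the $e$-version then follows from the displayed chain of equalities rather than by a separate symmetric argument. Some such device for trading an $e$-picture back for an $f$-picture on a smaller symmetric group appears unavoidable; \cref{lem:res.irre} alone does not supply it.
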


\begin{proof}
We start by proving the first statement.

If $a>\phi_i(\la)$ or $b>\phi_j(\mu)$ then $f_i^{(a)}D^\la=0$ or $f_j^{(b)}D^\mu=0$, so the statement clearly holds in these cases. If $a=0$ then 
\[\dim\Hom_{\s_n}(f_i^{(a)}D^\la,f_j^{(b)}D^\mu)=\dim\Hom_{\s_n}(D^\la,f_j^{(b)}D^\mu)\leq 1\]
since $e_j^{(b)}D^\mu$ has simple socle by \cref{lem:res.irre}. Similarly if $b=0$.

So we may assume that $0<a\leq\phi_i(\la)$ and $0<b\leq\phi_j(\mu)$. In these case
\begin{align}\label{E081223_2}
&\dim\Hom_{\s_n}(f_i^{(a)}D^\la,f_j^{(b)}D^\mu)=\frac{1}{a!b!}\dim\Hom_{\s_n}(f_i^aD^\la,f_j^bD^\mu)\\
\nonumber&=\frac{1}{a!b!}\dim\Hom_{\s_{n-b}}(e_j^bf_i^aD^\la,D^\mu)=\frac{1}{a!b!}\dim\Hom_{\s_{n-b}}(f_i^ae_j^bD^\la,D^\mu)\\
\nonumber&=\frac{1}{a!b!}\dim\Hom_{\s_{n-a-b}}(e_j^bD^\la,e_i^aD^\mu)=\dim\Hom_{\s_{n-a-b}}(e_j^{(b)}D^\la,e_i^{(a)}D^\mu),
\end{align}
where the third equality holds by \cite[Lemma 4.7]{M1} and the others by \cref{lem:div.func}.

If $b>\eps_j(\la)$ or $a>\eps_i(\mu)$ then $\dim\Hom_{\s_n}(f_i^{(a)}D^\la,f_j^{(b)}D^\mu)=0$. Otherwise
\begin{align}
\nonumber&\dim\Hom_{\s_n}(f_i^{(a)}D^\la,f_j^{(b)}D^\mu)=\dim\Hom_{\s_{n-a-b}}(e_j^{(b)}D^\la,e_i^{(a)}D^\mu)\\
\nonumber&\leq\dim\Hom_{\s_{n-a-b}}(f_j^{(\eps_j(\la)-b)}D^{\tilde e_j^{\eps_j(\la)}\la},f_i^{(\eps_i(\mu)-a)}D^{\tilde e_i^{\eps_i(\mu)}\mu})\\
\nonumber&=\dim\Hom_{\s_{n-a-b}}(f_i^{(\eps_i(\mu)-a)}D^{\tilde e_i^{\eps_i(\mu)}\mu},f_j^{(\eps_j(\la)-b)}D^{\tilde e_j^{\eps_j(\la)}\la}).
\end{align}
with the inequality holding in view of \cite[Lemma 3.3]{M1} and the last equality by self-duality of the translation functors, due to \cref{lem:div.func} and self-duality of irreducible representations of symmetric groups.

Note that $n-a-b<n$ since $a,b>0$. Repeating the above argument possibly multiple times, we obtain that
\[\dim\Hom_{\s_n}(f_i^{(a)}D^\la,f_j^{(b)}D^\mu)\leq\dim\Hom_{\s_m}(f_i^{(c)}D^\al,f_j^{(d)}D^\be)\leq 1\]
for some $\al,\be$ and some $c,d$ such that at least one of $c=0$, $c>\phi_i(\al)$, $d=0$ or $d>\phi_j(\be)$ holds or that
\[\dim\Hom_{\s_n}(f_i^{(a)}D^\la,f_j^{(b)}D^\mu)\leq\dim\Hom_{\s_m}(e_j^{(c)}D^\al,e_i^{(d)}D^\be)=0\]
for some $\al,\be$ and some $c,d$ with $c>\eps_j(\al)$ or $d>\eps_i(\be)$.
 
The second statement follows from the first and looking at the first and last term of \eqref{E081223_2}.
\end{proof}

We are now ready to prove Theorem \ref{thmb}. We actually prove the following stronger version of Theorem \ref{thmb}.

\begin{thm}\label{T081223}
Let $\la\in\Par_p(n)$. Assume that for some $i\not=j$,
\[\Ext^1_{\s_{n-\eps_i(\la)}}(D^{\tilde e_i^{\eps_i(\la)}\la},D^{\tilde e_i^{\eps_i(\la)}\la})=0=\Ext^1_{\s_{n-\eps_j(\la)}}(D^{\tilde e_j^{\eps_j(\la)}\la},D^{\tilde e_j^{\eps_j(\la)}\la}).\]
Then $\Ext^1_{\s_n}(D^\la,D^\la)=0$.

In particular if $\mu\in\Par_p(m)$ is minimal with $\Ext^1_{\s_m}(D^\mu,D^\mu)\not=0$ then all normal nodes of $\mu$ have the same residue.
\end{thm}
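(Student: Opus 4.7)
The plan is to argue by contradiction. Assume $\Ext^1_{\s_n}(D^\la,D^\la)\neq 0$ and let $V\cong D^\la|D^\la$ be a nonsplit self-extension. Write $r_i:=\eps_i(\la)$, $\mu:=\tilde e_i^{r_i}\la$, $r_j:=\eps_j(\la)$ and $\nu:=\tilde e_j^{r_j}\la$; by \cref{lem:res.irre}(iii), $e_i^{(r_i)}D^\la\cong D^\mu$ and $e_j^{(r_j)}D^\la\cong D^\nu$. Applying the exact functor $e_i^{(r_i)}$ to $V$ produces a self-extension of $D^\mu$ by itself, which must split by the first hypothesis, so $e_i^{(r_i)}V\cong D^\mu\oplus D^\mu$; symmetrically $e_j^{(r_j)}V\cong D^\nu\oplus D^\nu$.

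Next I would upgrade each splitting to an actual embedding of $V$. By biadjointness (\cref{lem:shap}),
\[\Hom_{\s_n}(V,f_i^{(r_i)}D^\mu)\cong\Hom_{\s_{n-r_i}}(e_i^{(r_i)}V,D^\mu)=\F^2,\]
whereas $\Hom(V/\soc V,f_i^{(r_i)}D^\mu)=\Hom(D^\la,f_i^{(r_i)}D^\mu)=\F$ because $\soc f_i^{(r_i)}D^\mu=D^\la$ is simple. Consequently there exists $\phi\colon V\to f_i^{(r_i)}D^\mu$ that does not vanish on $\soc V$; since $\soc V=D^\la$ is the unique simple submodule of $V$, such $\phi$ is injective. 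The same argument yields an embedding $V\hookrightarrow f_j^{(r_j)}D^\nu$.

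The crux is to combine the two embeddings through the commutation $e_j^{(r_j)}f_i^{(r_i)}\cong f_i^{(r_i)}e_j^{(r_j)}$, valid for $i\neq j$ by Mackey's formula together with the centrality of the block idempotents (the $\F\s_n$-summand appearing in $V\ua\da$ sits in the wrong block and is killed by the projector $b_{(a_k)+\de_i-\de_j}$). Applying $e_j^{(r_j)}$ to the first embedding yields
\[D^\nu\oplus D^\nu\cong e_j^{(r_j)}V\hookrightarrow e_j^{(r_j)}f_i^{(r_i)}D^\mu\cong f_i^{(r_i)}e_j^{(r_j)}D^\mu,\]
so $\dim\Hom(D^\nu,f_i^{(r_i)}e_j^{(r_j)}D^\mu)\geq 2$. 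On the other hand, combining the same commutation with adjunction gives
\[\Hom(D^\nu,f_i^{(r_i)}e_j^{(r_j)}D^\mu)\cong\Hom(D^\nu,e_j^{(r_j)}f_i^{(r_i)}D^\mu)\cong\Hom(f_j^{(r_j)}D^\nu,f_i^{(r_i)}D^\mu),\]
which has dimension at most $1$ by \cref{L081223}, contradicting the lower bound. The main difficulty I anticipate is spotting the correct setup that brings \cref{L081223} cleanly into play; once one pushes the embedding $V\hookrightarrow f_i^{(r_i)}D^\mu$ through $e_j^{(r_j)}$ and uses $e$--$f$ commutation, the resulting Hom space falls exactly within its scope.

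The \emph{in particular} statement then follows by minimality: if such a minimal $\mu$ had normal nodes of two distinct residues $i\neq j$, both $\tilde e_i^{\eps_i(\mu)}\mu$ and $\tilde e_j^{\eps_j(\mu)}\mu$ would be $p$-regular partitions of strictly fewer boxes, so minimality forces $\Ext^1(D^{\tilde e_i^{\eps_i(\mu)}\mu},D^{\tilde e_i^{\eps_i(\mu)}\mu})$ and $\Ext^1(D^{\tilde e_j^{\eps_j(\mu)}\mu},D^{\tilde e_j^{\eps_j(\mu)}\mu})$ both to vanish, and the first part of the theorem gives $\Ext^1(D^\mu,D^\mu)=0$, contradicting the choice of $\mu$.
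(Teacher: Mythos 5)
Your argument is correct and follows essentially the same route as the paper: both embed $V\cong D^\la|D^\la$ into $f_i^{(\eps_i(\la))}D^{\tilde e_i^{\eps_i(\la)}\la}$ and derive the contradiction $\dim\Hom_{\s_n}(f_j^{(\eps_j(\la))}D^{\tilde e_j^{\eps_j(\la)}\la},f_i^{(\eps_i(\la))}D^{\tilde e_i^{\eps_i(\la)}\la})\geq 2$ against \cref{L081223}. The only cosmetic difference is that the paper obtains the second factor by dualizing ($V$ is a quotient of $f_j^{(\eps_j(\la))}D^{\tilde e_j^{\eps_j(\la)}\la}$), whereas you apply the exact functor $e_j^{(\eps_j(\la))}$ to the embedding and use adjunction, which amounts to the same computation.
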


\begin{proof}
The second statement follows from the first. So we may assume that $\Ext^1_{\s_n}(D^\la,D^\la)\not=0$ but 
\[\Ext^1_{\s_{n-\eps_i(\la)}}(D^{\tilde e_i^{\eps_i(\la)}\la},D^{\tilde e_i^{\eps_i(\la)}\la})=0=\Ext^1_{\s_{n-\eps_j(\la)}}(D^{\tilde e_j^{\eps_j(\la)}\la},D^{\tilde e_j^{\eps_j(\la)}\la}).\]
From the short exact sequence
\[0\to D^\la\to f_i^{(\eps_i(\la))}D^{\tilde e_i^{\eps_i(\la)}\la}\to f_i^{(\eps_i(\la))}D^{\tilde e_i^{\eps_i(\la)}\la}/D^\la\to 0,\]
the corresponding long exact sequence for $\Ext^i(D^\la,-)$ and \cref{lem:shap,lem:res.irre} we obtain that
\begin{equation}\label{E131124}
\Ext^1_{\s_n}(D^\la,D^\la)\cong\Hom_{\s_n}(D^\la,f_i^{(\eps_i(\la))}D^{\tilde e_i^{\eps_i(\la)}\la}/D^\la).
\end{equation}

Let $V\cong D^\la|D^\la$ (such a $V$ exists since $\Ext^1_{\s_n}(D^\la,D^\la)\not=0$). Since $\soc(f_i^{(\eps_i(\la))}D^{\tilde e_i^{\eps_i(\la)}\la})\cong D^\la$ by \cref{lem:res.irre}, we have by (\ref{E131124}) that $V\subseteq f_i^{(\eps_i(\la))}D^{\tilde e_i^{\eps_i(\la)}\la}$. Similarly $V^*\subseteq f_j^{(\eps_i(\la))}D^{\tilde e_j^{\eps_j(\la)}\la}$, so that $V$ is a quotient of $f_j^{(\eps_i(\la))}D^{\tilde e_j^{\eps_j(\la)}\la}$.

It then follows that
\[\dim\Hom_{\s_n}(f_j^{(\eps_i(\la))}D^{\tilde e_j^{\eps_j(\la)}\la},f_i^{(\eps_i(\la))}D^{\tilde e_i^{\eps_i(\la)}\la})\geq\End_{\s_n}(V)=2,\]
contradicting \cref{L081223}.
\end{proof}

We will next prove Theorem \ref{thmc}. We start with the following lemma:

\begin{lemma}\label{L081223_2}
Let $\la\in\Par_p(n)$ and $i\in I$. Assume that $V\cong D^\la|D^\la\subseteq f_i^{(\eps_i(\la))}D^{\tilde e_i^{\eps_i(\la)}\la}$ and let $b,c$ minimal with $V\subseteq f_i^{(b)}D^{\tilde e_i^{b}\la}$ and with $V$ a quotient of $e_i^{(c)}D^{\tilde f_i^{c}\la}$. Then both $b$ and $c$ exist and $b=c\geq 1$.
\end{lemma}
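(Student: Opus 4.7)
The existence of $b$ and the bound $b \geq 1$ are immediate: the hypothesis gives $b \leq \eps_i(\la)$, so the set $\{r \geq 0 : V \subseteq f_i^{(r)}D^{\tilde e_i^{r}\la}\}$ is nonempty, while $b = 0$ would force $V \subseteq f_i^{(0)}D^{\tilde e_i^{0}\la} = D^\la$, contradicting $\dim V = 2\dim D^\la$.

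My strategy for the existence of $c$ and the equality $b = c$ is to establish, for every $r \geq 0$, the equivalence
\[ V \subseteq f_i^{(r)}D^{\tilde e_i^{r}\la} \;\Longleftrightarrow\; V \text{ is a quotient of } e_i^{(r)}D^{\tilde f_i^{r}\la}. \]
Granted this, the two sets over which $b$ and $c$ are minimized coincide, so $c$ exists and equals $b$.

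Each side of the equivalence is first translated into a Hom-dimension condition. Since $f_i^{(r)}D^{\tilde e_i^{r}\la}$ has simple socle $D^\la$ (for $r \leq \eps_i(\la)$) and $e_i^{(r)}D^{\tilde f_i^{r}\la}$ has simple head $D^\la$ (for $r \leq \phi_i(\la)$) by \cref{lem:res.irre}, the long exact sequences obtained by applying $\Hom(V,-)$, respectively $\Hom(-,V)$, to the defining sequence $0 \to D^\la \to V \to D^\la \to 0$ produce Hom-dimensions in $\{1,2\}$, with the value $2$ occurring precisely when the defining condition on that side holds (a trivial map through the socle or head always contributes the first copy of $\F$, and a second independent map provides the required injection or surjection). \cref{lem:shap} then rewrites these Hom-dimensions as $\dim\Hom(e_i^{(r)}V, D^{\tilde e_i^{r}\la})$ and $\dim\Hom(D^{\tilde f_i^{r}\la}, f_i^{(r)}V)$, each of which is in turn controlled by a connecting $\Ext^1$-class arising from the short exact sequences $0 \to e_i^{(r)}D^\la \to e_i^{(r)}V \to e_i^{(r)}D^\la \to 0$ and $0 \to f_i^{(r)}D^\la \to f_i^{(r)}V \to f_i^{(r)}D^\la \to 0$ obtained by applying the exact functors $e_i^{(r)}$ and $f_i^{(r)}$ to the defining sequence of $V$.

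The main obstacle is identifying these two connecting classes, which a priori lie in the different groups $\Ext^1(e_i^{(r)}D^\la, D^{\tilde e_i^{r}\la})$ and $\Ext^1(D^{\tilde f_i^{r}\la}, f_i^{(r)}D^\la)$ and involve the distinct partitions $\tilde e_i^{r}\la$ and $\tilde f_i^{r}\la$. I expect both to be the image of the extension class $\eta \in \Ext^1(D^\la,D^\la)$ of $V$ under a natural transformation built from $e_i^{(r)}$, respectively $f_i^{(r)}$, followed by pushforward/pullback along a canonical head or socle map; using the biadjointness and self-duality of the divided-power functors (\cref{lem:div.func}) together with \cref{lem:shap}, these two natural transformations should coincide. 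This matching, carried out in the spirit of the Shapiro-and-duality chain of isomorphisms in \eqref{E081223_2} from the proof of \cref{L081223}, is the technical heart of the argument.
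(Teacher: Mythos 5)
Your existence argument for $b$ and the bound $b\geq 1$ are fine, but the proposal has a genuine gap at exactly the point you flag as ``the technical heart'': the claimed equivalence, for each $r$, between $V\subseteq f_i^{(r)}D^{\te_i^{r}\la}$ and $V$ being a quotient of $e_i^{(r)}D^{\tf_i^{r}\la}$ is never proved, and the mechanism you propose for proving it does not obviously exist. Translating both conditions into Hom-dimensions and then into connecting maps is correct as far as it goes: the first condition is the vanishing of $\pi_*(e_i^{(r)}\eta)\in\Ext^1(e_i^{(r)}D^\la,D^{\te_i^r\la})$ and the second is the vanishing of $\iota^*(f_i^{(r)}\eta)\in\Ext^1(D^{\tf_i^r\la},f_i^{(r)}D^\la)$, where $\eta=[V]$. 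But after transporting both classes by adjunction and duality into groups of the form $\Ext^1(D^\la,-)$, the targets become $f_i^{(r)}D^{\te_i^r\la}$ and $e_i^{(r)}D^{\tf_i^r\la}$, and these modules are \emph{not} isomorphic for intermediate $r$: by \cref{lem:res.irre} they contain $D^\la$ with multiplicities $\binom{\phi_i(\la)+r}{r}$ and $\binom{\eps_i(\la)+r}{r}$ respectively. So there is no canonical natural transformation along which the two pushforwards of $\eta$ could be compared, and the matching you hope for cannot be a formal consequence of biadjointness and Shapiro's lemma. (The equivalence for each $r$ is true a posteriori, because both sets of admissible $r$ are upward closed, but that is a consequence of $b=c$, not a route to it.) Note also that your plan only yields the existence of $c$ as a corollary of this unproved equivalence, so that part of the statement is missing as well.

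The paper proceeds quite differently and asymmetrically. Existence of $c$ comes from the isomorphism $f_i^{(\eps_i(\la))}D^{\te_i^{\eps_i(\la)}\la}\cong e_i^{(\phi_i(\la))}D^{\tf_i^{\phi_i(\la)}\la}$, which holds only at these extreme values, together with the fact that $\dim\End$ of this module equals its composition multiplicity of $D^\la$, forcing $V$ to occur as a quotient as well as a submodule. The inequality $c\leq b$ is then obtained by a counting argument: one shows that $f_i^{(c-1)}V$ embeds in a single copy of $f_i^{(b+c-1)}D^{\te_i^{b}\la}$ (via the simple-socle analysis of $f_i^{(c-1)}f_i^{(b)}D^{\te_i^b\la}$), and comparing $\dim\Hom_{\s_{n+c}}(D^{\tf_i^c\la},f_if_i^{(c-1)}V)=2c$ with $\dim\Hom_{\s_{n+c}}(D^{\tf_i^c\la},f_if_i^{(b+c-1)}D^{\te_i^{b}\la})=b+c$ gives $2c\le b+c$; the reverse inequality is symmetric. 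To salvage your approach you would need to supply the missing identification of the two connecting classes, which I do not see how to do; otherwise the counting argument is the way to go.
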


\begin{proof}
Clearly $b$ exists. By \cite[Lemma 3.4]{M1}, $f_i^{(\eps_i(\la))}D^{\tilde e_i^{\eps_i(\la)}\la}\cong e_i^{(\phi_i(\la))}D^{\tilde f_i^{\phi_i(\la)}\la}$. Since by \cref{lem:res.irre}
\[\dim\End_{\s_{n-r}}(e_i^{(\phi_i(\la))}D^{\tilde f_i^{\phi_i(\la)}\la})=[e_i^{(\phi_i(\la))}D^{\tilde f_i^{\phi_i(\la)}\la}:D^\la]\]
and $e_i^{(\phi_i(\la))}D^{\tilde f_i^{\phi_i(\la)}\la}$ has head and socle isomorphic to $D^\la$, it follows that $V$ is also a quotient of $e_i^{(\phi_i(\la))}D^{\tilde f_i^{\phi_i(\la)}\la}$. So $c$ also exists.

Since $f_i^{(0)}D^{\tilde e_i^0\la}\cong D^\la\cong e_i^{(0)}D^{\tilde f_i^0\la}$, we have that $b,c\geq 1$. By minimality of $b$ and $c$ we also have that $V\not\subseteq f_i^{(b-1)}D^{\tilde e_i^{b-1}\la}$ and that $V$ is not a quotient of $e_i^{(c-1)}D^{\tilde f_i^{c-1}\la}$.

We will only prove that $c\leq b$ since $b\leq c$ can be proved similarly.

Since $e_i^{(c)}D^{\tilde f_i^c\la}$ and $e_i^{(c-1)}D^{\tilde f_i^{c-1}\la}$ have simple head isomorphic to $D^\la$ by \cref{lem:res.irre} and since $e_i^{(a)}$ and $f_i^{(a)}$ are biadjoint functors for every $a$ by \cref{lem:div.func} we have that
\begin{align*}
\dim\Hom_{\s_{n+c}}(D^{\tilde f_i^c\la},f_i^{(c)}V)&=\dim\Hom_{\s_n}(e_i^{(c)}D^{\tilde f_i^c\la},V)=2,\\
\dim\Hom_{\s_{n+c-1}}(D^{\tilde f_i^{c-1}\la},f_i^{(c-1)}V)&=\dim\Hom_{\s_n}(e_i^{(c-1)}D^{\tilde f_i^{c-1}\la},V)=1.
\end{align*}

In view of \cref{lem:div.func}
\[f_i^{(c-1)}V\subseteq f_i^{(c-1)}f_i^{(b)}D^{\tilde e_i^{b}\la}\subseteq (f_i^{(b+c-1)}D^{\tilde e_i^{b}\la})^{\oplus\binom{b+c-1}{b}}.\]
By \cref{lem:res.irre}, $\soc(f_i^{(b+c-1)}D^{\tilde e_i^{b}\la})\cong D^{\tilde f_i^{c-1}\la}$. It then follows that $\soc(f_i^{(c-1)}V)\cong D^{\tilde f_i^{c-1}\la}$ and then that $f_i^{(c-1)}V\subseteq f_i^{(b+c-1)}D^{\tilde e_i^{b}\la}$. So
\begin{align*}
2c&=c\dim\Hom_{\s_{n+c}}(D^{\tilde f_i^c\la},f_i^{(c)}V)=\dim\Hom_{\s_{n+c}}(D^{\tilde f_i^c\la},f_if_i^{(c-1)}V)\\
&\leq \dim\Hom_{\s_{n+c}}(D^{\tilde f_i^c\la},f_if_i^{(b+c-1)}D^{\tilde e_i^{b}\la})\\
&=(b+c)\dim\Hom_{\s_{n+c}}(D^{\tilde f_i^c\la},f_i^{(b+c)}D^{\tilde e_i^{b}\la})=b+c,
\end{align*}
with the last equality on the first row on the first equality on the last row holding by \cref{lem:div.func} and the last equality holding by \cref{lem:res.irre}. In particular $c\leq b$.
\end{proof}

We are now ready to prove Theorem \ref{thmc}.

\begin{proof}[Proof of Theorem \ref{thmc}]
By \cref{L081223_2}, $b\geq 1$ and $b$ is also minimal such that $V$ is a quotient of $e_i^{(b)}D^{\tilde f_i^{b}\la}$. In particular $f_i^{(b)}D^{\tilde e_i^{b}\la},e_i^{(b)}D^{\tilde f_i^{b}\la}\not=0$, so $1\leq b\leq\min\{\eps_i(\la),\phi_i(\la)\}$.
 
By \cref{lem:div.func} and the proof of \cref{L081223_2} (and using that $b=c$),
\begin{align*}
\dim\Hom_{\s_{n+b-1}}(e_iD^{\tilde f_i^b\la},f_i^{(b-1)}V)=\dim\Hom_{\s_{n+b}}(D^{\tilde f_i^b\la},f_if_i^{(b-1)}V)=2b.
\end{align*}

By the proof of \cref{L081223_2} we also have that $f_i^{(b-1)}V\subseteq f_i^{(2b-1)}D^{\tilde e_i^{b}\la}$. In view of \cref{lem:res.irre} and \cite[Lemma 2.1]{GKM}, no two distinct submodules of $f_i^{(2b-1)}D^{\tilde e_i^{b}\la}$ are isomorhic. Then same then hold also for submodules of $f_i^{(b-1)}V$. 

By \cite[Lemma 3.9]{KMT}
\[e_iD^{\tilde f_i^b\la}=W_{\eps_i(\la)+b}\supseteq W_{\eps_i(\la)+b-1}\supseteq\ldots\supseteq W_1\cong D^{\tilde f_i^{b-1}\la}\]
with $W_j$ self-dual, with head and socle isomorphic to $D^{\tilde f_i^{b-1}\la}$ and with $[W_j:D^{\tilde f_i^{b-1}\la}]=j$ for each $1\leq j\leq\eps_i(\la)+b$. By \cite[Lemma 2.6]{GKM} if $W$ is any submodule of $e_iD^{\tilde f_i^b\la}$ with only $D^{\tilde f_i^{b-1}\la}$ appearing in the head of $W$ then $W=W_j$ for some $j$.


Since $e_iD^{\tilde f_i^b\la}$ is self-dual by \cref{lem:res.irre}, there similarly exist submodules
\[M_{\eps_i(\la)+b}\subseteq M_{\eps_i(\la)+b-1}\subseteq\ldots\subseteq M_1\subseteq e_iD^{\tilde f_i^b\la}\]
such that $e_iD^{\tilde f_i^b\la}/M_K\cong W_K$ for every $k$. Further if $M\subseteq e_iD^{\tilde f_i^b\la}$ with $e_iD^{\tilde f_i^b\la}/M$ having only $D^{\tilde f_i^{b-1}\la}$ appearing in its socle, then $M=M_k$ for some $k$. Further distinct quotients of $e_iD^{\tilde f_i^b\la}$ are pairwise non-isomorphic.


Since $\dim\Hom_{\s_{n+b-1}}(e_iD^{\tilde f_i^b\la},f_i^{(b-1)}V)=2b$, it follows by the last three paragraphs that there exists $2b$ distinct $k$ such that $W_k$ is a submodule of $f_i^{(b-1)}V$.

In particular there exists $k\geq 2b$ with $W_k\subseteq f_i^{(b-1)}V$. As $W_a\subseteq W_k$ for every $a\leq k$, we in particular have that $W_{b+j}\subseteq W_k\subseteq f_i^{(b-1)}V$ for every $1\leq j\leq b$.

Assume that $\la_{B_1,\ldots,B_b}^{C_j}$ is $p$-regular, so that $\soc((S^{\la_{B_1,\ldots,B_b}^{C_j}})^*)\cong D^{\la_{B_1,\ldots,B_b}^{C_j}}$. Then by \cite[Lemma 2.6]{GKM} and \cite[Lemma 3.4]{M2} $D^{\la_{B_1,\ldots,B_b}^{C_j}}$ is a composition factor of $W_{b+j}$ and then also of $f_i^{(b-1)}V$. Since $V\cong D^\la|D^\la$, it follows that $D^{\la_{B_1,\ldots,B_b}^{C_j}}$ is a composition factor of $f_i^{(b-1)}D^\la$. By \cite[Corollary 2.26]{GKM}, $\la_{B_1,\ldots,B_b}^{C_j}\unrhd\la_{D_1,\ldots,D_{b-1}}$ for some $i$-removable nodes $D_1,\ldots,D_{b-1}$ of $\la$.

Similarly, starting from
\begin{align*}
\dim\Hom_{\s_{n-b+1}}(f_iD^{\tilde e_i^b\la},e_i^{(b-1)}V^*)=\dim\Hom_{\s_{n-b+1}}(e_i^{(b-1)}V,f_iD^{\tilde e_i^b\la})=2b
\end{align*}
and using \cite[Lemma 3.11]{KMT} instead of \cite[Lemma 3.9]{KMT}, if $\la_{B_j}^{C_1,\ldots,C_b}$ is $p$-regular and $1\leq j\leq b$, then $\la_{B_j}^{C_1,\ldots,C_b}\unrhd\la^{E_1,\ldots,E_{b-1}}$ for some $i$-addable nodes $E_1,\ldots,E_{b-1}$ of $\la$.

Let $B_1$ be on row $r$. Then $B_j$ is in row $\leq r$ for every $j$, while $C_j$ is on row $>r$ for every $j$. Then
\[\sum_{k\geq r+1}(\la_{B_1,\ldots,B_b}^{C_j})_k=\sum_{k\geq r+1}\la_k+1\geq \sum_{k\geq r+1}(\la_{D_1,\ldots,D_{b-1}})_k+1\]
and
\[\sum_{k\leq r}(\la_{B_j}^{C_1,\ldots,C_b})_k=\sum_{k\leq r}\la_k-1\geq \sum_{k\leq r}(\la^{E_1,\ldots,E_{b-1}})_k-1\]
contradicting $\la_{B_1,\ldots,B_b}^{C_j}\unrhd\la_{D_1,\ldots,D_{b-1}}$ or $\la_{B_j}^{C_1,\ldots,C_b}\unrhd\la^{E_1,\ldots,E_{b-1}}$.
\end{proof}

\begin{cor}\label{C081223}
Let $\la\in\Par_p(n)$ and $V\cong D^\la|D^\la$. Assume that for some $b\geq 1$, $V\subseteq f_i^{(b)}D^{\tilde e_i^{b}\la}$ but $V\not\subseteq f_i^{(b-1)}D^{\tilde e_i^{b-1}\la}$. Then for some $h\geq 0$ and $a\geq b$, $\la$ is of the form
\begin{align*}
&(\la_1,\ldots,\la_h,a+b,(a+b-1)^{p-1},\ldots,(a+1)^{p-1},a^{p-2},(a-1)^{p-1},\ldots,(a-b+1)^{p-1},\\
&a-b,\la_{h+(2b-1)(p-1)+2},\la_{h+(2b-1)(p-1)+3},\ldots),
\end{align*}
the bottom $b$ $i$-conormal nodes of $\la$ are the nodes
\[\{(h+(b-1-j)(p-1)+1,a+1+j)|0\leq j\leq b-1\}\]
and the top $b$ $i$-conormal nodes of $\la$ are the nodes
\[\{(h+(b-1+j)(p-1),a+1-j)|1\leq j\leq b\}.\]
\end{cor}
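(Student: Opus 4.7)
The plan is to deduce the structural form of $\la$ from the $p$-singularity conditions provided by \cref{thmc} via a combinatorial analysis on the abacus. First, I would fix an abacus display $\Ga(\la)$ with sufficiently many beads so that $0\in\Ga(\la)$, as in \S\ref{subsec:comb}. On the runner of residue $i$, let $q_1<q_2<\dots<q_b$ be the occupied positions corresponding to the bottom $b$ normal nodes $B_1,\dots,B_b$, and let $s_b<s_{b-1}<\dots<s_1$ be the empty positions corresponding to the top $b$ conormal nodes $C_1,\dots,C_b$. Since the reduced $i$-signature has the form $+^{\phi_i}-^{\eps_i}$, the $s_k$'s lie strictly above the $q_k$'s along runner $i$, i.e., $s_1<q_1$.

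The central reformulation is that a partition is $p$-singular exactly when its beta-set contains $p$ consecutive occupied positions. The operation $\la\mapsto\la_{B_1,\dots,B_b}^{C_j}$ corresponds on the abacus to moving each bead at $q_k$ one step to the left (to $q_k-1$) and simultaneously moving the bead at $s_j-1$ one step to the right (to $s_j$); the operation $\la\mapsto\la_{B_j}^{C_1,\dots,C_b}$ is analogous with the single removal and multiple additions. Since $\la$ itself is $p$-regular, each new $p$-run of occupied positions in the modified beta-set must involve at least one of the moved positions. Running $j$ over $1,\dots,b$ in both families and tracking which positions must remain occupied or empty in the intermediate stretches to accommodate all required $p$-runs simultaneously, the occupancy pattern of $\Ga(\la)$ on $[s_b,q_b]$ is forced to consist of runs of consecutive beads of lengths
\[1,\ \underbrace{p-1,\dots,p-1}_{b-1},\ p-2,\ \underbrace{p-1,\dots,p-1}_{b-1},\ 1,\]
separated by single empty positions, with the runner-$i$ beads of these runs being exactly $q_1,\dots,q_b$ and the runner-$i$ gaps being exactly $s_1,\dots,s_b$.

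Translating this bead pattern back into the partition via $\la_k = a_k + k - H$ recovers the displayed shape, with $h$ the number of parts above the special block, $a$ the value of the middle part, and the inequality $a\geq b$ arising from non-negativity of the last part $a-b$. The listed coordinates of the $B_j$'s and $C_j$'s then follow by direct inspection of rows and columns in the displayed Young diagram together with a residue computation ($i \equiv a-h+b-1 \pmod p$). The main obstacle I anticipate is the case analysis in the middle step: a priori each newly created $p$-long run could sit in several places along the modified beta-set, and pinning it down to the claimed location requires using $p$-regularity of $\la$ simultaneously with the dual constraints coming from both families, so that the conditions on $\la_{B_j}^{C_1,\dots,C_b}$ rigidify the abacus around the $q_k$'s in parallel with how the conditions on $\la_{B_1,\dots,B_b}^{C_j}$ rigidify it around the $s_j$'s.
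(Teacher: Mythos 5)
Your overall strategy coincides with the paper's: the printed proof is precisely ``a simple combinatorial argument'' on the abacus starting from \cref{thmc}, and your target bead pattern on $[s_b,q_b]$ (runs of lengths $1,(p-1)^{b-1},p-2,(p-1)^{b-1},1$ separated by single gaps) is the correct one, as is your dictionary between bead moves and the operations $\la\mapsto\la_{B_1,\ldots,B_b}^{C_j}$ and $\la\mapsto\la_{B_j}^{C_1,\ldots,C_b}$.

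There is, however, one substantive gap in the inputs on which you propose to run the case analysis. You use only the $p$-singularity of the mixed modifications together with the $p$-regularity of $\la$ itself, whereas the paper's proof explicitly also uses that $\la_{B_1,\ldots,B_b}=\te_i^b\la$ and $\la^{C_1,\ldots,C_b}=\tf_i^b\la$ are $p$-regular. This extra input is genuinely needed: for example, if positions $s_1+1,\ldots,s_1+p-1$ of $\Ga(\la)$ are all occupied and $q_1>s_1+p$, then $\la_{B_1,\ldots,B_b}^{C_1}$ acquires the $p$-run $\{s_1,\ldots,s_1+p-1\}$ with no constraint whatsoever near the $q_k$, and $\la$ itself can still be $p$-regular (take $s_1+p$ empty). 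Your stated filter (``each new $p$-run must involve a moved position'') does not exclude this, since that run does contain the moved position $s_1$. What excludes it is that $\tf_i^b\la$ would then be $p$-singular --- equivalently, that $s_1+p$ would be an $i$-addable position whose $+$ cannot cancel in the reduced signature, so that $s_1$ would not be the largest $i$-conormal position; a symmetric phenomenon near $q_1$ is excluded by the $p$-regularity of $\te_i^b\la$. So to close the middle step you must systematically invoke either the $p$-regularity of $\te_i^b\la$ and $\tf_i^b\la$, or, equivalently, the extremality of $q_1,\ldots,q_b$ and $s_1,\ldots,s_b$ among the normal/conormal positions on runner $i$ --- not merely the inequality $s_1<q_1$. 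Since you do record the reduced-signature shape $+^{\phi_i}-^{\eps_i}$, the needed facts are available in your setup, but as written the mechanism you describe is insufficient to force the claimed pattern.
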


\begin{proof}
This follows from \cref{T081223_2} through a simple combinatorial argument, since $\la_{B_1,\ldots,B_a}$ and $\la^{C_1,\ldots,C_a}$ are $p$-regular but $\la_{B_1,\ldots,B_a}^{C_j}$ and $\la_{B_j}^{C_1,\ldots,C_a}$ are $p$-singular for every $1\leq j\leq a$.
\end{proof}

\end{document}